\DeclareMathAlphabet{\mathpgoth}{OT1}{pgoth}{m}{n}
\definecolor{labelkey}{rgb}{0.6,0,1}
\definecolor{darkbrown}{HTML}{996633}
\newcommand{\logLogSlopeTriangle}[5]
{
    % #1. Relative offset in x direction.
    % #2. Width in x direction, so xA-xB.
    % #3. Relative offset in y direction.
    % #4. Slope d(y)/d(log10(x)).
    % #5. Plot options.

    \pgfplotsextra
    {
        \pgfkeysgetvalue{/pgfplots/xmin}{\xmin}
        \pgfkeysgetvalue{/pgfplots/xmax}{\xmax}
        \pgfkeysgetvalue{/pgfplots/ymin}{\ymin}
        \pgfkeysgetvalue{/pgfplots/ymax}{\ymax}

        % Calculate auxilliary quantities, in relative sense.
        \pgfmathsetmacro{\xArel}{#1}
        \pgfmathsetmacro{\yArel}{#3}
        \pgfmathsetmacro{\xBrel}{#1-#2}
        \pgfmathsetmacro{\yBrel}{\yArel}
        \pgfmathsetmacro{\xCrel}{\xArel}
        %\pgfmathsetmacro{\yCrel}{ln(\yC/exp(\ymin))/ln(exp(\ymax)/exp(\ymin))} % REPLACE THIS EXPRESSION WITH AN EXPRESSION INDEPENDENT OF \yC TO PREVENT THE 'DIMENSION TOO LARGE' ERROR.

        \pgfmathsetmacro{\lnxB}{\xmin*(1-(#1-#2))+\xmax*(#1-#2)} % in [xmin,xmax].
        \pgfmathsetmacro{\lnxA}{\xmin*(1-#1)+\xmax*#1} % in [xmin,xmax].
        \pgfmathsetmacro{\lnyA}{\ymin*(1-#3)+\ymax*#3} % in [ymin,ymax].
        \pgfmathsetmacro{\lnyC}{\lnyA+#4*(\lnxA-\lnxB)}
        \pgfmathsetmacro{\yCrel}{\lnyC-\ymin)/(\ymax-\ymin)} % THE IMPROVED EXPRESSION WITHOUT 'DIMENSION TOO LARGE' ERROR.

        % Define coordinates for \draw. MIND THE 'rel axis cs' as opposed to the 'axis cs'.
        \coordinate (A) at (rel axis cs:\xArel,\yArel);
        \coordinate (B) at (rel axis cs:\xBrel,\yBrel);
        \coordinate (C) at (rel axis cs:\xCrel,\yCrel);

        % Draw slope triangle.
        \draw[black]   (A)-- node[pos=0.5,anchor=north] {\scriptsize{1}}
                    (B)-- 
                    (C)-- node[pos=0.,anchor=west] {\scriptsize{\color{#5}#4}} %% node[pos=0.5,anchor=west] {#4}
                    (A);
    }
}
\newtheorem{theorem}{Theorem}
\newtheorem{proposition}[theorem]{Proposition}
\newtheorem{lemma}[theorem]{Lemma}
\theoremstyle{remark}
\newtheorem{remark}[theorem]{Remark}
\theoremstyle{definition}
\newtheorem{assumption}{Assumption}
\newtheorem{example}[theorem]{Example}
\newcommand{\email}[1]{\href{mailto:#1}{#1}}
\newcounter{corr}
\definecolor{violet}{rgb}{0.580,0.,0.827}
\newcommand{\corr}[3]{\typeout{Warning : a correction remains in page
    \thepage}
  \stepcounter{corr}        
	      {\color{blue}\ifmmode\text{\,\sout{\ensuremath{#1}}\,}\else\sout{#1}\fi}
        {\color{red}#2}
        {\color{violet} #3}
}
\title{Improved error estimates for Hybrid High-Order discretizations of Leray--Lions problems}
\author[1]{Daniele A. Di Pietro \footnote{\email{daniele.di-pietro@umontpellier.fr}}}
\author[2]{J\'{e}r\^{o}me Droniou \footnote{\email{jerome.droniou@monash.edu}}}
\author[1]{Andr\'{e} Harnist \footnote{\email{andre.harnist@umontpellier.fr}, corresponding author}}	
\affil[1]{IMAG, Univ Montpellier, CNRS, Montpellier, France}
\affil[2]{School of Mathematics, Monash University, Melbourne, Australia}
\def\b{\boldsymbol}
\newcommand*\cst[1]{\mathrm{#1}}
\newcommand{\R}{\mathbb{R}} %reals
\newcommand{\N}{\mathbb{N}} %naturals
\newcommand{\Poly}{\mathbb{P}} %polynomials
\newcommand\dU[2]{{\u U}_{#1}^{#2}} %discrete velocity space
\renewcommand\u{\underline} %discrete scalar function
\newcommand{\T}{\mathcal{T}} %elements
\newcommand{\F}{\mathcal{F}} %faces
\newcommand{\Fb}{\F_h^\cst{b}} %boundary faces
\newcommand{\Fi}{\F_h^\cst{i}} %interfaces
\newcommand{\res}[1]{\ \!\!_{|_{#1}}} %restriction
\newcommand{\dgrad}[2]{\b{\cst{G}}^{#1}_{#2}} %discrete gradient
\newcommand{\drec}[2]{\cst{r}^{#1}_{#2}} %discrete reconstruction operator
\newcommand{\dfbres}[2]{\Delta^{#1}_{#2}} %discrete face-based residual operator
\newcommand{\GRAD}{\b\nabla} %gradient
\newcommand{\DIV}{\b\nabla{\cdot}} %divergence for matrices
\newcommand\I[2]{\u{I}_{#1}^{#2}} %interpolator
\newcommand\proj[2]{\pi_{#1}^{#2}} %scalar projection
\newcommand\PROJ[2]{\b{\pi}_{#1}^{#2}} %projection for vectors or matrices
\newcommand\stress{\b\sigma} %shear stress-strain rate function
\newcommand\sob{p}
\newcommand\stab{\cst{S}}
\DeclareMathOperator*{\essinf}{ess\,inf}
\begin{document}

\maketitle

\begin{abstract}
  We derive novel error estimates for Hybrid High-Order (HHO) discretizations of Leray--Lions problems set in $W^{1,p}$ with $p\in (1,2]$.
  Specifically, we prove that, depending on the degeneracy of the problem, the convergence rate may vary between $(k+1)(p-1)$ and $(k+1)$, with $k$ denoting the degree of the HHO approximation.
  These regime-dependent error estimates are illustrated by a complete panel of numerical experiments.
  \medskip\\
  \textbf{Keywords:} Hybrid High-Order methods, degenerate Leray--Lions problems, regime-dependent error estimates
  \smallskip\\
  \textbf{MSC2018 classification:} 65N08, 65N30, 65N12 
\end{abstract}

\section{Introduction}
\label{intro}

We consider Hybrid High-Order (HHO) approximations of Leray--Lions problems set in $W^{1,\sob}$ with $\sob\in(1,2]$.
For this class of problems, negative powers of the gradient of the solution can appear in the flux.
Depending on the expression of the latter, this can lead to a (local) degeneracy of the problem when the gradient of the solution vanishes or becomes large.

In this work, we prove novel error estimates that highlight the dependence of the convergence rate on the two possible cases of degeneracy simultaneously, and we do not differentiate these two degeneracies any longer.
Specifically, we show that, for the globally non-degenerate case, the energy-norm of the error converges as $h^{k+1}$, with $h$ denoting the mesh size and $k$ the degree of the HHO approximation.
In the globally degenerate case, on the other hand, the energy-norm of the error converges as $h^{(k+1)(\sob-1)}$, coherently with the estimate originally proved in \cite[Theorem 3.2]{Di-Pietro.Droniou:17*1}.
We additionally introduce, for each mesh element $T$ of diameter $h_T$, a dimensionless number $\eta_T$ that captures the local degeneracy of the model in $T$ and identifies the contribution of the element to the global error: from the fully degenerate regime, corresponding to a contribution in $\mathcal{O}(h_T^{(k+1)(p-1)})$, to the non-degenerate regime, corresponding to a contribution in $\mathcal{O}(h_T^{k+1})$, through all intermediate regimes.
Estimates depending on local regimes have been established in linear settings (see, e.g., \cite{Di-Pietro.Droniou.ea:18,Di-Pietro.Ern.ea:08} for advection--diffusion--reaction models and \cite{Botti.Di-Pietro.ea:18} for the Brinkman problem).
Such work has also been done for the Leray--Lions problems with Adaptative Finite Element methods in \cite{belenki:12}; to the best of our knowledge, their extension to HHO methods is entirely new.

Error estimates for the lowest-order conforming finite element approximation of the pure $p$-Laplacian have been known for quite some time; see, e.g., the founding work \cite{Glowinski.Marrocco:75}, in which $\mathcal O(h^{1/(3-p)})$ error estimates are obtained in the case $p\le 2$ considered here. These estimates were later improved in \cite{Barrett.Liu:93} to $\mathcal O(h)$, for solutions of high (and global) regularity -- in the space $W^{3,1}(\Omega)\cap C^{2,(2-p)/p}(\overline{\Omega})$. The above results have been extended to non-conforming finite elements in \cite{Liu.Yan:01}. A glaciology model is considered in \cite{Glowinski.Rappaz:03}, corresponding to a non-degenerate $p$-Laplace equation (with flux satisfying \eqref{eq:ass:sigma} below with $\delta=1$), and error estimates for the conforming finite element approximation have been obtained: $\mathcal O(h)$ if the solution is in $H^2(\Omega)$, and $O(h^{p/2})$ if it belongs to $W^{2,p}(\Omega)$. A common feature of all these studies, in which sharp error estimates are derived (which do not degrade too much as $p$ gets far from 2), is that they only consider low-order schemes on 2D triangular meshes and with continuity properties -- either all along the edges for the conforming method, or at the edges midpoints for the non-conforming method. 
To our knowledge, for higher-order methods that may involve fully discontinuous functions and are applicable to generic polytopal meshes, such as HHO, no sharp error estimates are known and only convergence in $h^{(k+1)(p-1)}$ has been established so far. This paper therefore bridges a gap between the results available for the low-order finite element methods and HHO methods.
Notice that, very recently, $\mathcal O(h^{(k+1)/(3-p)})$ error estimates have been obtained in \cite{carstensen:20} for an HHO method on standard simplicial meshes based on a stable gradient inspired by \cite{Di-Pietro.Droniou.ea:18}.
In passing, even though we focus on the HHO method, our approach could in all likelihood be extended to other polytopal methods such as, e.g., the Mimetic Finite Difference method \cite{Antonietti.Bigoni.ea:14} or the Virtual Element method \cite{Beirao-da-Veiga.Brezzi.ea:13}; see the preface of \cite{Di-Pietro.Droniou:20} for an up-to-date literature review on this subject.

The rest of the paper is organized as follows. In Section \ref{sec:continuous.setting} we establish the continuous setting, including novel assumptions on the flux function weaker than the ones considered in \cite[Section 3.1]{Di-Pietro.Droniou:17*1}.
In Section \ref{sec:discrete.setting} we briefly recall the discrete setting upon which rests the HHO scheme described in Section \ref{sec:scheme}.
The main result of this paper is contained Section \ref{sec:error.estimate}.
Finally, Section \ref{sec:num.res} contains a complete panel of numerical tests illustrating the effect of local degeneracy on the convergence rate.

\section{Continuous setting}\label{sec:continuous.setting}

\subsection{Flux function}

Let $\Omega \subset \R^d$, $d\in\N^*$, denote a bounded, connected, polytopal open set with Lipschitz boundary $\partial\Omega$. We consider the Leray--Lions problem, which consist in finding $u : \Omega \to \R$ such that
\begin{subequations}\label{eq:lr.continuous}
  \begin{alignat}{2} 
    -\DIV\stress(\cdot,\GRAD u)  &= f &\qquad& \mbox{  in  } \Omega,  \label{eq:lr.continuous:mass} \\
    u &=  0 &\qquad& \mbox{ on } \partial \Omega,
  \end{alignat}
\end{subequations} 
where $f : \Omega \to \R$ represents a volumetric force term, while $\stress : \Omega \times \R^d \to \R^d$ is the \emph{flux function}.
The flux function is possibly variable in space and depends on the \emph{potential} $u:\Omega\to\R$ only through its gradient.
The following assumptions characterize $\stress$.

\begin{assumption}[Flux function]\label{ass:stress}
  Let a real number $\sob \in (1,2]$ be fixed and denote by
    \[
    \sob' \coloneqq \frac{\sob}{\sob-1} \in \lbrack 2,+\infty)
    \]
    the conjugate exponent of $\sob$.
    The flux function satisfies
    \begin{subequations}\label{eq:ass:sigma}
      \begin{gather}
        \stress(\b x,\b 0) = \b 0 \text{ for almost every } \b x \in \Omega,\label{eq:ass-stress:0}
        \\
        \stress(\cdot,\b \xi):\Omega\to\R^d\mbox{ is measurable for all $\b \xi\in\R^d$}. 
      \end{gather}
      Moreover, there exist a \emph{degeneracy function} $\delta \in L^p(\Omega,[0,+\infty))$ and two real numbers $\sigma_\cst{hc},\sigma_\cst{sm} \in (0,+\infty)$ such that, for all $\b\tau,\b\eta \in \R^d$ and almost every $\b x \in \Omega$, we have the \emph{continuity} property
        \label{eq:power-framed:s.holder.continuity.strong.monotonicity}
        \begin{align} 
          \left|
          \stress(\b x,\b\tau)-\stress(\b x,\b\eta)
          \right| &\le \sigma_\cst{hc} \left(\delta(\b x)^\sob+|\b\tau|^\sob+|\b\eta|^\sob\right)^\frac{\sob-2}{\sob}| \b\tau-\b\eta |,\label{eq:power-framed:s.holder.continuity} 
        \end{align}
        and the \emph{strong monotonicity} property
        \begin{align}
          \left(\stress(\b x,\b\tau)-\stress(\b x,\b\eta)\right)\cdot(\b\tau-\b\eta)  \ge \sigma_\cst{sm}\left(\delta(\b x)^\sob+|\b\tau|^\sob+|\b\eta|^\sob\right)^\frac{\sob-2}{\sob}|\b\tau-\b\eta|^{2}.\label{eq:power-framed:s.strong.monotonicity}
        \end{align}    
    \end{subequations}
\end{assumption}

Some remarks are in order.

\begin{remark}[Flux at rest]
  Assumption \eqref{eq:ass-stress:0} expresses the fact that the flux at rest is zero, and can be relaxed taking $\stress(\cdot,\b 0)  \in L^{\sob'}(\Omega)^d$. This modification requires only minor changes in the analysis, not detailed for the sake of conciseness.
\end{remark}

\begin{remark}[Relations between the continuity and monotonicity constants]
  Inequalities \eqref{eq:power-framed:s.holder.continuity} and \eqref{eq:power-framed:s.strong.monotonicity} give
  \begin{equation}\label{eq:power-framed:constants.bound}
    \sigma_\cst{sm} \leq \sigma_\cst{hc}.
  \end{equation}
  Indeed, let $\b\tau \in \R^d$ be such that $|\b\tau| > 0$. Using the strong monotonicity \eqref{eq:power-framed:s.strong.monotonicity} (with $\b \eta = \b 0$) along with \eqref{eq:ass-stress:0}, the Cauchy--Schwarz inequality, and the continuity \eqref{eq:power-framed:s.holder.continuity} (again with $\b \eta = \b 0$) and \eqref{eq:ass-stress:0}, we infer that
  \[
  \begin{aligned}
    \sigma_\cst{sm}\left(\delta^\sob+|\b\tau|^\sob\right)^\frac{\sob-2}{\sob}|\b\tau|^{2} &\leq  \stress(\cdot,\b\tau)\cdot\b\tau\leq |\stress(\cdot,\b\tau)||\b\tau|\leq \sigma_\cst{hc}\left(\delta^\sob+|\b\tau|^\sob\right)^\frac{\sob-2}{\sob}| \b\tau|^2
  \end{aligned}
  \]
  almost everywhere in $\Omega$, hence \eqref{eq:power-framed:constants.bound}.
\end{remark}

\begin{remark}[Degenerate case]
 We note the following inequality:
  For all $x,y \in \R^n$, $n \in \N^*$, and all $\alpha \in [0,+\infty)$,
    \begin{equation}\label{eq:prolongement}
      (\alpha+|x|+|y|)^{p-2}|x-y| \le |x-y|^{p-1}.
    \end{equation}
    To prove \eqref{eq:prolongement}, notice that, if $\alpha+|x|+|y| > 0$, using a triangle inequality to write $|x|+|y|\ge |x-y|$ together with the fact that $\R \ni t \mapsto t^{\sob-2} \in \R$ is non-increasing (since $\sob<2$) and $\alpha\ge 0$, we infer that $\left(\alpha+|x|+|y|\right)^{\sob-2}\le |x-y|^{\sob-2}$, which, multiplying by $|x-y|$, gives \eqref{eq:prolongement}.
    Since \eqref{eq:prolongement} is valid when $\alpha+|x|+|y|>0$, we can extend the left-hand side by continuity (with value $0$) in the singular case $\alpha+|x|+|y|=0$ and this estimate remains valid.

    Inequality \eqref{eq:sum-power} below together with \eqref{eq:prolongement} ensures that properties \eqref{eq:power-framed:s.holder.continuity}--\eqref{eq:power-framed:s.strong.monotonicity} are well-formulated by extension also when $\delta(\b x)^p+|\b\tau|^p+|\b\eta|^p$ vanishes. 
    As a consequence, $\stress(\b x,\cdot):\R^d\to\R^d$ is continuous for a.e. $\b x\in\Omega$.
    The relation \eqref{eq:prolongement} will also play a key role in the proof of Theorem \ref{thm:error.estimate} below.
\end{remark}

\begin{remark}[Non-degenerate case]
	In \cite{Di-Pietro.Droniou:17*1}, an error estimate is given for broader versions of inequalities \eqref{eq:power-framed:s.holder.continuity}--\eqref{eq:power-framed:s.strong.monotonicity}. The novelty here lies in the introduction of the degeneracy function $\delta$, since it directly affects the convergence rate of the method. The crux of its intervention is located in the proof of Theorem \ref{thm:error.estimate}, and more precisely at \eqref{eq:consistency:ah:T2:1} where it prevents singularities. See Remark \ref{rem:ocv} for more details, see also Figure \ref{tab:num.res:flux} for a set of numerical results illustrating this phenomenon. 
\end{remark}

\begin{example}[$p$-Laplace flux function]
  A typical example of flux function is $\stress(\b x,\b\tau)=|\b\tau|^{p-2}\b\tau$, for which \eqref{eq:lr.continuous} is
the $p$-Laplace equation $-\DIV(|\GRAD u|^{p-2}\GRAD u)=f$. This flux function satisfies Assumption \ref{ass:stress} with
degeneracy function $\delta=0$, see e.g. \cite[Lemma 6.26]{Di-Pietro.Droniou:20}.
\end{example}

\begin{example}[Carreau--Yasuda flux function]\label{ex:Carreau--Yasuda}
  Another example of function $\stress$ which satisfies Assumption \ref{ass:stress}, inspired by the rheology of Carreau--Yosida fluids, is obtained setting, for almost every $\b x\in\Omega$ and all $\b\tau \in \R^d$,
  \begin{equation}\label{eq:Carreau--Yasuda}
    \stress(\b x,\b\tau) = \mu(\b x)\left(\delta(\b x)^{a(\b x)}+|\b\tau|^{a(\b x)}\right)^\frac{\sob-2}{a(\b x)}\b\tau,
  \end{equation}
  where $\mu : \Omega \to [\mu_-,\mu_+]$ is a measurable function with $\mu_-,\mu_+ \in (0,+\infty)$ corresponding to the \emph{local flow consistency index}, $\delta \in L^p(\Omega,[0,+\infty))$ is the \emph{degeneracy parameter}, $a : \Omega \to [a_-,a_+]$ is a measurable function with $a_-,a_+ \in (0,+\infty)$ expressing the \emph{local transition flow behavior index}, and $\sob \in (1,2]$ is the \emph{flow behavior index}.
  It was proved in \cite[Appendix A]{Botti.Castanon-Quiroz.ea:20} that $\stress$ is a $\sob$-power-framed function (with a straightforward analogy to replace the degeneracy constant $\sigma_\cst{de}$ therein by the degeneracy function $\delta$) with
    \[
    \sigma_\cst{hc} = 
      \frac{\mu_+}{\sob-1}2^{\left[-\left(\frac{1}{a_+}-\frac{1}{\sob}\right)^\ominus-1\right](\sob-2)+\frac{1}{\sob}} 
    \quad\text{and}\quad
    \sigma_\cst{sm} = 
      \mu_-(\sob-1)2^{\left(\frac{1}{a_-}-\frac{1}{\sob}\right)^\oplus(\sob-2)},
    \]
where $\xi^\oplus\coloneq\max(0;\xi)$ and $\xi^\ominus\coloneq-\min(0;\xi)$ denote, respectively, the positive and negative parts of a real number $\xi$. As a consequence, the flux function \eqref{eq:Carreau--Yasuda} matches Assumption \ref{ass:stress}.
\end{example}

\subsection{Weak formulation}\label{sec:weak.formulation}

We define the following space for the potential embedding the homogeneous boundary condition: 
\[
U \coloneqq W_0^{1,\sob}(\Omega).
\]
Assuming $f \in L^{\sob'}(\Omega)$, the weak formulation of problem \eqref{eq:lr.continuous} reads:
Find $u \in U$ such that
\begin{equation}\label{eq:lr.weak}
     a(u,v) = \displaystyle\int_\Omega f v \qquad \forall v \in U,
\end{equation}
where the \emph{diffusion function} $a : U \times U \to \R$ is defined such that, for all $v,w \in U$,
\begin{equation}\label{eq:a.b}
  a(w,v) \coloneqq \displaystyle\int_\Omega \stress(\cdot,\GRAD w) \cdot \GRAD v.
\end{equation}

\begin{proposition}[Well-posedness and a priori estimate]\label{prop:a-priori}
  Under Assumption \ref{ass:stress}, the continuous problem \eqref{eq:lr.weak} admits a unique solution $u \in U$ that satisfies the following a priori bound:
  \begin{equation}\label{eq:continuous.solution:bounds:uh}
    \begin{aligned}
    \|\GRAD u\|_{L^{\sob}(\Omega)^d} &\le
    \left(2^{\frac{2-\sob}{\sob}}C_{\rm P}\sigma_\cst{sm}^{-1}\| f \|_{L^{\sob'}(\Omega)}\right)^\frac{1}{\sob-1}
    + \min\!\left(\|\delta\|_{L^\sob(\Omega)};2^{\frac{2-\sob}{\sob}}C_{\rm P}\sigma_\cst{sm}^{-1}\|\delta\|_{L^\sob(\Omega)}^{2-\sob}\| f \|_{L^{\sob'}(\Omega)}\right).
      \end{aligned}
  \end{equation}
  where the real number $C_{\rm P}>0$, only depending on $\Omega$ and on $p$, is such that, for all $v\in W_0^{1,p}(\Omega)$, the Poincar\'e inequality $\|v\|_{L^p(\Omega)}\le C_{\rm P}\|\GRAD v\|_{L^p(\Omega)^d}$ holds.
\end{proposition}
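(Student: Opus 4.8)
The plan is to establish well-posedness via the classical theory of monotone operators (Browder--Minty), and then to extract the a priori bound by testing the weak formulation against the solution itself.

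First I would recast \eqref{eq:lr.weak} as an operator equation $A u = F$ in the reflexive Banach space $U = W_0^{1,p}(\Omega)$, where $A : U \to U^\star$ is defined by $\langle A w, v\rangle \coloneqq a(w,v)$ and $F \in U^\star$ by $\langle F, v\rangle \coloneqq \int_\Omega f v$ (which is well-defined and bounded by H\"older and the Poincar\'e inequality since $f \in L^{p'}(\Omega)$). The key structural properties of $A$ follow from Assumption \ref{ass:stress}: strict monotonicity is a direct consequence of \eqref{eq:power-framed:s.strong.monotonicity} together with \eqref{eq:prolongement}, which shows $\langle A w - A v, w - v\rangle \ge \sigma_\cst{sm} \int_\Omega |\GRAD(w-v)|^p > 0$ whenever $w \neq v$ (using the Poincar\'e inequality to see $\GRAD(w-v)$ cannot vanish a.e.); coercivity follows similarly, writing $\langle A w, w\rangle = \langle A w - A\b 0, w - \b 0\rangle \ge \sigma_\cst{sm}\|\GRAD w\|_{L^p(\Omega)^d}^p$ via \eqref{eq:ass-stress:0}, \eqref{eq:power-framed:s.strong.monotonicity} and \eqref{eq:prolongement}, so that $\langle Aw,w\rangle/\|w\|_U \to +\infty$; and hemicontinuity (indeed continuity) of $A$ follows from the continuity of $\stress(\b x,\cdot)$ noted in the Degenerate case remark, the growth bound obtained by combining \eqref{eq:power-framed:s.holder.continuity} with \eqref{eq:prolongement} and \eqref{eq:ass-stress:0} (giving $|\stress(\b x,\b\tau)| \le \sigma_\cst{hc}|\b\tau|^{p-1}$), and dominated convergence. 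The Browder--Minty theorem then yields existence, and strict monotonicity yields uniqueness.

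For the a priori estimate I would test \eqref{eq:lr.weak} with $v = u$ to get $\int_\Omega \stress(\cdot,\GRAD u)\cdot\GRAD u = \int_\Omega f u$. On the left, I split the argument depending on whether one keeps the degeneracy function or not: using \eqref{eq:power-framed:s.strong.monotonicity} with $\b\eta = \b 0$ and \eqref{eq:ass-stress:0}, the integrand is bounded below by $\sigma_\cst{sm}(\delta(\b x)^p + |\GRAD u|^p)^{(p-2)/p}|\GRAD u|^2$, and one can either apply \eqref{eq:prolongement} to lose the $\delta$ term and obtain a lower bound $\sigma_\cst{sm}\|\GRAD u\|_{L^p(\Omega)^d}^p$, or instead bound $(\delta^p+|\GRAD u|^p)^{(p-2)/p} \ge (\|\delta\|_{L^p}^{\text{pointwise-ish}}\dots)$ — more precisely one uses $(\delta(\b x)^p+|\GRAD u|^p)^{(p-2)/p}|\GRAD u|^2 \ge$ a quantity that, after integration and a H\"older argument with exponents tuned to $p$, produces the alternative factor $\|\delta\|_{L^p(\Omega)}^{2-p}\|\GRAD u\|_{L^p(\Omega)^d}^2$; the $\min$ in \eqref{eq:continuous.solution:bounds:uh} reflects taking whichever of these two lower bounds is more favorable. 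On the right, H\"older followed by the Poincar\'e inequality gives $\int_\Omega f u \le C_{\rm P}\|f\|_{L^{p'}(\Omega)}\|\GRAD u\|_{L^p(\Omega)^d}$. Combining, dividing, and solving the resulting scalar inequalities (one of the form $\sigma_\cst{sm} t^p \le C_{\rm P}\|f\|_{p'} t$ giving $t \le (\cdots)^{1/(p-1)}$, the other of the form involving $t^2$ and $\|\delta\|^{2-p}$ giving the second alternative) yields \eqref{eq:continuous.solution:bounds:uh}, up to tracking the constant $2^{(2-p)/p}$ that arises from a convexity/Jensen step when passing between $(\delta^p+|\GRAD u|^p)^{(p-2)/p}$ and separated powers of $\delta$ and $|\GRAD u|$.

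The main obstacle is the bookkeeping in the a priori bound: the exponent $(p-2)/p$ is negative (since $p < 2$), so the weight $(\delta^p+|\GRAD u|^p)^{(p-2)/p}$ is singular where the gradient and $\delta$ are both small, and one must be careful to use \eqref{eq:prolongement} — precisely the extension-by-continuity device set up in the Degenerate case remark — to make sense of and bound the coercivity integral, and then to juggle the two competing lower bounds (the purely $p$-Laplacian-type one and the $\delta$-assisted one) so that the final constant matches the stated $\min(\cdots)$ expression with the correct powers $\tfrac{1}{p-1}$ and $2-p$. The existence/uniqueness part is, by contrast, a routine application of standard monotone operator theory once the growth bound $|\stress(\b x,\b\tau)|\le\sigma_\cst{hc}|\b\tau|^{p-1}$ and monotonicity are in hand.
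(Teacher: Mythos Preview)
There is a genuine gap: you are using inequality \eqref{eq:prolongement} in the wrong direction. That inequality says $(\alpha+|x|+|y|)^{p-2}|x-y|\le |x-y|^{p-1}$, so multiplying by $|x-y|$ gives the \emph{upper} bound $(\alpha+|x|+|y|)^{p-2}|x-y|^2\le |x-y|^p$. Your claims $\langle Aw-Av,w-v\rangle\ge\sigma_\cst{sm}\int_\Omega|\GRAD(w-v)|^p$ and $\langle Aw,w\rangle\ge\sigma_\cst{sm}\|\GRAD w\|_{L^p(\Omega)^d}^p$ require the reverse inequality, which is false: since $p<2$, the weight $(\delta^p+|\b\tau|^p)^{(p-2)/p}$ is a decreasing function of $\delta$, so it is bounded \emph{above} by $|\b\tau|^{p-2}$, not below (indeed, for fixed $\delta>0$ the ratio $(\delta^p+|\b\tau|^p)^{(p-2)/p}|\b\tau|^2/|\b\tau|^p\to 0$ as $|\b\tau|\to 0$). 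This breaks both your coercivity justification and the first branch of your a priori argument, and consequently the ``two competing lower bounds'' picture leading to the $\min$ does not hold as stated.

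The paper avoids this by \emph{not} attempting a pointwise lower bound of the form $|\GRAD u|^p$. Instead, it integrates the raw pointwise strong monotonicity and then applies a H\"older inequality with exponents $\big(\tfrac{2}{2-p},\tfrac{2}{p}\big)$ to obtain the single norm-level inequality
\[
\sigma_\cst{sm}\big(\|\delta\|_{L^p(\Omega)}^p+\|\GRAD u\|_{L^p(\Omega)^d}^p\big)^{\frac{p-2}{p}}\|\GRAD u\|_{L^p(\Omega)^d}^2\le a(u,u)\le C_{\rm P}\|f\|_{L^{p'}(\Omega)}\|\GRAD u\|_{L^p(\Omega)^d}.
\]
Setting $\mathcal N\coloneq\big(\|\delta\|_{L^p}^p+\|\GRAD u\|_{L^p}^p\big)^{(p-2)/p}\|\GRAD u\|_{L^p}$, the two terms in the $\min$ then arise from a case distinction on whether $\|\GRAD u\|_{L^p}\ge\|\delta\|_{L^p}$ or not (and the constant $2^{(2-p)/p}$ comes from bounding $\|\delta\|_{L^p}^p+\|\GRAD u\|_{L^p}^p$ by $2\max(\cdot)^p$), rather than from two separate lower bounds on $a(u,u)$. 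Your Browder--Minty outline for well-posedness is fine in spirit (and matches what the cited reference does), but the coercivity step has to be argued through this same H\"older device, which yields $\langle Aw,w\rangle\gtrsim\|\GRAD w\|_{L^p}^p$ only asymptotically as $\|\GRAD w\|_{L^p}\to\infty$---enough for Browder--Minty, but not the uniform pointwise bound you wrote.
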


\begin{proof}
  For the existence and uniqueness of a solution to \eqref{eq:lr.weak} see, e.g., \cite[Section 2.4]{Hirn:13}.
  To prove the a priori bound \eqref{eq:continuous.solution:bounds:uh}, use the strong monotonicity \eqref{eq:power-framed:s.strong.monotonicity} of $\stress$, \eqref{eq:lr.weak} written for $v=u$, and invoke the H\"older and Poincar\'e inequalities to write
  \[
  \begin{aligned}
    \sigma_\cst{sm}\left(
    \|\delta\|_{L^\sob(\Omega)}^\sob + \|\GRAD u\|_{L^\sob(\Omega)^d}^\sob
    \right)^\frac{\sob-2}{\sob} \|\GRAD u\|_{L^\sob(\Omega)^d}^2
    &\le  a(u,u)
    = \displaystyle\int_\Omega f u\le
    C_{\rm P} \| f \|_{L^{\sob'}(\Omega)}\|\GRAD u\|_{L^\sob(\Omega)^d},
  \end{aligned}
  \]
  that is,
  \begin{equation}\label{eq:continuous.solution:bounds:uh:1}
    \mathcal{N}\coloneqq \left(
    \|\delta\|_{L^\sob(\Omega)}^\sob + \|\GRAD u\|_{L^\sob(\Omega)^d}^\sob
    \right)^\frac{\sob-2}{\sob} \|\GRAD u\|_{L^\sob(\Omega)^d}
    \le C_{\rm P}  \sigma_\cst{sm}^{-1}\| f \|_{L^{\sob'}(\Omega)}.
  \end{equation}
  Observing that $\|\GRAD u\|_{L^\sob(\Omega)^d}\le 2^{\frac{2-\sob}{\sob}} \max\big(\|\GRAD u\|_{L^\sob(\Omega)^d};\|\delta\|_{L^\sob(\Omega)}\big)^{2-\sob}\mathcal{N} $, we obtain, enumerating the cases for the maximum and summing the corresponding bounds,
  \begin{equation}\label{eq:continuous.solution:bounds:u:2}
  \|\GRAD u\|_{L^\sob(\Omega)^d} \le (2^{\frac{2-\sob}{\sob}}\mathcal N)^{\frac{1}{\sob-1}} +2^{\frac{2-\sob}{\sob}}\|\delta\|_{L^\sob(\Omega)}^{2-\sob}\mathcal{N}.
  \end{equation}
  On the other hand, we have $\mathcal N \ge 2^\frac{p-2}{p}\|\GRAD u\|_{L^\sob(\Omega)^d}^{p-1}$ if $\|\GRAD u\|_{L^\sob(\Omega)^d} \ge \|\delta\|_{L^\sob(\Omega)}$. Thus we have, for any value of $\|\GRAD u\|_{L^\sob(\Omega)^d}$,
  \begin{equation}\label{eq:continuous.solution:bounds:u:3}
  \|\GRAD u\|_{L^\sob(\Omega)^d} \le (2^{\frac{2-\sob}{\sob}}\mathcal N)^{\frac{1}{\sob-1}} +\|\delta\|_{L^\sob(\Omega)}.
  \end{equation}
  Combining \eqref{eq:continuous.solution:bounds:uh:1} with the minimum of inequalities \eqref{eq:continuous.solution:bounds:u:2} and \eqref{eq:continuous.solution:bounds:u:3} gives \eqref{eq:continuous.solution:bounds:uh}.
\end{proof}

\section{Discrete setting}\label{sec:discrete.setting}

\subsection{Mesh}

For any set $X\subset\R^d$, denote by $h_X$ its diameter.
A polytopal mesh is defined as a couple $\mathcal M_h\coloneq(\T_h,\F_h)$, where $\T_h$ is a finite collection of polytopal elements $T\in\T_h$ such that $h=\max_{T\in\T_h}h_T$, while $\F_h$ is a finite collection of hyperplanar faces.
It is assumed henceforth that the mesh $\mathcal M_h$ matches the geometrical requirements detailed in \cite[Definition 1.7]{Di-Pietro.Droniou:20}.
Boundary faces lying on $\partial\Omega$ and internal faces contained in
$\Omega$ are collected in the sets $\F_h^{\rm b}$ and $\F_h^{\rm i}$, respectively.
For every mesh element $T\in\T_h$, we denote by $\F_T$ the subset of $\F_h$ collecting the faces that lie on the boundary $\partial T$ of $T$. For every face $F \in \F_h$, we denote by $\T_F$ the subset of $\T_h$ containing the one (if $F\in\F_h^{\rm b}$) or two (if $F\in\F_h^{\rm i}$) elements on whose boundary $F$ lies.
For each mesh element $T\in\T_h$ and face $F\in\F_T$, $\b n_{TF}$ denotes the (constant) unit vector normal to $F$ pointing out of $T$.

Our focus is on the $h$-convergence analysis, so we consider a sequence of refined meshes that is regular in the sense of \cite[Definition 1.9]{Di-Pietro.Droniou:20}, with regularity parameter uniformly bounded away from zero.
The mesh regularity assumption implies, in particular, that the diameter of a mesh element and those of its faces are comparable uniformly in $h$, and that the number of faces of one element is bounded above by an integer independent of $h$.

\subsection{Notation for inequalities up to a multiplicative constant}

To avoid the proliferation of generic constants, we write henceforth $a\lesssim b$ (resp., $a\gtrsim b$) for the inequality $a\le Cb$ (resp., $a\ge Cb$) with real number $C>0$ independent of $h$, of the parameters $\delta,\sigma_\cst{hc},\sigma_\cst{sm}$ in Assumption \ref{ass:stress}, and, for local inequalities, of the mesh element or face on which the inequality holds.
We also write $a\simeq b$ to mean $a\lesssim b$ and $b\lesssim a$.
The dependencies of the hidden constants are further specified when needed.

\subsection{Projectors and broken spaces}
  
Given $X \in \T_h \cup \F_h$ and $l \in \N$, we denote by $\Poly^l(X)$ the space spanned by the restriction to $X$ of scalar-valued, $d$-variate polynomials of total degree $\le l$.
The local $L^2$-orthogonal projector $\proj{X}{l} : L^{1}(X) \to \Poly^l(X)$ is defined such that, for all $v \in L^{1}(X)$,
\begin{equation}\label{eq:proj}
  \displaystyle\int_X (\proj{X}{l} v-v) w = 0 \qquad \forall w \in  \Poly^{l}(X).
\end{equation}
When applied to vector-valued functions in $L^1(X)^d$, the $L^2$-orthogonal projector mapping on $\Poly^l(X)^d$ acts component-wise and is denoted in boldface font as $\PROJ{X}{l}$.
Let $T\in\T_h$, $n\in[0,l+1]$, and $m\in[0,n]$.
The following $(n,\sob,m)$-approximation properties of $\proj{T}{l}$ hold:
For any $v\in W^{n,\sob}(T)$,
\begin{subequations}\label{eq:proj:app}
\begin{equation}\label{eq:proj:app:T}
  |v-\proj{T}{l}v|_{W^{m,\sob}(T)} \lesssim h_T^{n-m}|v|_{W^{n,\sob}(T)}.
\end{equation}
The above property will also be used in what follows with $\sob$ replaced by its conjugate exponent $\sob'$.
If, additionally, $n\ge 1$, we have the following $(n,\sob')$-trace approximation property:
\begin{equation}\label{eq:proj:app:F}
    \|v-\proj{T}{l}v\|_{L^{\sob'}(\partial T)}\lesssim h_T^{n-\frac{1}{\sob'}}|v|_{W^{n,\sob'}(T)}.
\end{equation}
\end{subequations}
The hidden constants in \eqref{eq:proj:app} are independent of $h$ and $T$, but possibly depend on $d$, the mesh regularity parameter, $l$, $n$, and $\sob$.
The approximation properties \eqref{eq:proj:app} are proved for integer $n$ and $m$ in \cite[Appendix A.2]{Di-Pietro.Droniou:17} (see also \cite[Theorem 1.45]{Di-Pietro.Droniou:20}), and can be extended to non-integer values using standard interpolation techniques (see, e.g., \cite[Theorem 5.1]{Lions.Magenes:72}).

The additional regularity on the exact solution in the error estimates will be expressed in terms of the broken Sobolev spaces
\[
W^{n,\sob}(\T_h)\coloneq\left\{ v\in L^\sob(\Omega)\ : \ v\res{T}\in W^{n,\sob}(T)\quad\forall T\in\T_h\right\}.
\]
The corresponding seminorm is such that $|v|_{W^{n,\sob}(\T_h)} \coloneq \big(\sum_{T\in\T_h}|v|_{W^{n,\sob}(T)}^\sob\big)^\frac{1}{\sob}$ for all $v \in W^{n,\sob}(\T_h)$.

%------------------------------------------------------------------------------%

\section{HHO discretization}\label{sec:scheme}

\subsection{Hybrid space and norms}

Let an integer $k\ge 0$ be fixed. The HHO space is, with usual notation,
\[
\dU{h}{k} \coloneqq \left\{
\u v_h = ((v_T)_{T \in \T_h},(v_F)_{F\in \F_h}) \ : \ v_T \in \Poly^k(T)\ \ \forall T \in \T_h\ \mbox{ and }\ v_F \in \Poly^k(F)\ \ \forall F \in \F_h \right\}.
\]
The interpolation operator $\I{h}{k} : W^{1,1}(\Omega) \to  \dU{h}{k} $ maps a function $v \in W^{1,1}(\Omega)$ on the vector of discrete unknowns $\I{h}{k}v$ defined as follows:
\[
  \I{h}{k} v \coloneqq ((\proj{T}{k} v\res{T})_{T \in \T_h},(\proj{F}{k} v\res{F})_{F \in \F_h}).
  \]
For all $T \in \T_h$, we denote by $\dU{T}{k}$ and $\I{T}{k}$ the restrictions of $\dU{h}{k}$ and $\I{h}{k}$ to $T$, respectively, and, for all $\u v_h \in \dU{h}{k}$, we let $\u v_T \coloneqq (v_T,(v_F)_{F\in \F_T}) \in \dU{T}{k}$ denote the vector collecting the discrete unknowns attached to $T$ and its faces.
Furthermore, for all $\u v_h \in \dU{h}{k}$, we define the broken polynomial field $v_h\in\Poly^k(\T_h)$ obtained patching element unknowns, that is,
\[
(v_h)\res{T} \coloneqq v_T\qquad\forall T \in \T_h.
\]

For all $q \in (1,+\infty)$, we define on $\dU{h}{k}$ the $W^{1,q}(\Omega)$-like seminorm $\| {\cdot} \|_{1,q,h}$ such that, for all $\u v_h \in \dU{h}{k}$,
\begin{subequations}\label{eq:norm.epsilon.r}
  \begin{gather}\label{eq:norm.epsilon.r.h}
    \| \u v_h \|_{1,q,h} \coloneqq \left(\displaystyle\sum_{T \in \T_h}\| \u v_T \|_{1,q,T}^q\right)^\frac{1}{q}
    \\\label{eq:norm.epsilon.r.T}
    \text{with 
      $\| \u v_T \|_{1,q,T} \coloneqq \left(\| \GRAD v_T \|^q_{L^q(T)^d} + \displaystyle\sum_{F \in \F_T} h_F^{1-q} \| v_F - v_T\|^q_{L^q(F)}\right)^\frac{1}{q}$
      for all $T \in \T_h$.
    }
  \end{gather}
\end{subequations}
The following boundedness property for $\I{T}{k}$ is proved in \cite[Proposition 6.24]{Di-Pietro.Droniou:20}:
For all $T \in \T_h$ and all $v \in W^{1,\sob}(T)$,
\begin{equation}\label{eq:I:boundedness}
  \|\I{T}{k} v\|_{1,\sob,T} \lesssim | v |_{W^{1,\sob}(T)},
\end{equation}
where the hidden constant depends only on $d$, the mesh regularity parameter, $\sob$, and $k$.

The discrete potential is sought in the subspace of $\dU{h}{k}$ embedding the homogeneous boundary condition:
\[
\dU{h,0}{k} \coloneqq \left\{ \u v_h = ((v_T)_{T \in \T_h},(v_F)_{F\in \F_h}) \in \dU{h}{k} \ : \ v_F = 0 \quad \forall F \in \Fb \right\}.
\]
The following discrete Poincar\'e inequality descends from \cite[Proposition 5.4]{Di-Pietro.Droniou:17} (cf. Remark 5.5 therein):
For all $\u v_h  \in \dU{h,0}{k}$,
  \begin{equation}\label{eq:discrete.Poincare}
    \|  v_h \|_{L^\sob(\Omega)} \lesssim \|   \u v_h \|_{1,\sob,h}.
  \end{equation}
By virtue of this inequality, $\| {\cdot} \|_{1,\sob,h}$ is a norm on $\dU{h,0}{k}$ (reason as in \cite[Corollary 2.16]{Di-Pietro.Droniou:20}).

\subsection{Reconstructions}

For all $T \in \T_h$, we define the \emph{local gradient reconstruction} $\dgrad{k}{T} : \dU{T}{k} \to \Poly^{k}(T)^d$ such that, for all $\u v_T \in \dU{T}{k}$,
\begin{equation}\label{eq:G}
  \displaystyle\int_T \dgrad{k}{T} \u v_T \cdot \b\tau = \int_T \GRAD v_T \cdot \b\tau + \sum_{F \in \F_T} \int_F (v_F-v_T)~(\b\tau \cdot \b n_{TF})\qquad \forall \b\tau \in  \Poly^{k}(T)^d.
\end{equation}
By design, the following relation holds (see \cite[Section 7.2.5]{Di-Pietro.Droniou:20}):
For all $v\in W^{1,1}(T)$,
\begin{equation}\label{eq:G:proj}
  \dgrad{k}{T} (\I{T}{k} v) = \PROJ{T}{k}(\GRAD v).
\end{equation}
The \emph{local potential reconstruction} $\drec{k+1}{T} : \dU{T}{k} \to \Poly^{k+1}(T)$ is such that, for all $\u v_T \in \dU{T}{k}$,
\begin{equation}\label{eq:rT}
\text{
  $\displaystyle\int_T (\GRAD \drec{k+1}{T} \u v_T - \dgrad{k}{T} \u v_T) \cdot \GRAD w = 0$ for all $w \in  \Poly^{k+1}(T)$
  and
  $\int_T \drec{k+1}{T} \u v_T = \int_T v_T.$
  }
\end{equation}
Composed with the local interpolator, this reconstruction commutes with the elliptic projector; see \cite[Sections 1.3 and 2.1.1--2.1.3]{Di-Pietro.Droniou:20}.

\subsection{Discrete diffusion function}

The \emph{discrete diffusion function} $\cst{a}_h : \dU{h}{k} \times \dU{h}{k} \to \R$, discretizing the function $a$ defined by \eqref{eq:a.b}, is such that, for all $\u v_h,\u w_h \in \dU{h}{k}$,
\begin{equation}\label{eq:ah}
  \cst{a}_h(\u w_h, \u v_h) \coloneqq
    \sum_{T \in \T_h}\left(\int_T \stress(\cdot,\dgrad{k}{T} \u w_T)\cdot \dgrad{k}{T} \u v_T+\cst{s}_T(\u w_T,\u v_T)\right).
\end{equation}
Above, for all $T\in\T_h$, $\cst{s}_T:\dU{T}{k}\times\dU{T}{k}\to\mathbb{R}$ is a local stabilization function.
To state the assumptions on this function, we introduce the mesh skeleton $\partial\mathcal{M}_h\coloneq\bigcup_{F\in\F_h}\overline{F}$ and set
 \begin{equation}
 \begin{aligned}
 L^p(\partial\mathcal{M}_h)&\coloneq \left\{\mu\ : \ \partial\mathcal{M}_h\to\R\,:\,\mu_{|F}\in L^p(F)\quad\forall F\in\F_h\right\},\\
\|\mu\|_{L^p(\partial\mathcal{M}_h)}&\coloneq \left(\sum_{T\in\T_h}h_T\sum_{F\in\F_T}\|\mu_{|F}\|_{L^p(F)}^p\right)^{\frac1p}.
 \end{aligned}
 \end{equation}
 \begin{assumption}[Local stabilization functions]\label{ass:sT}
   There exists $\zeta \in L^p(\partial \mathcal{M}_h;[0,+\infty))$ such that, for all $T \in \T_h$ and all $\u v_T,\u w_T \in \dU{T}{k}$,
   \begin{equation}
     \cst{s}_T(\u w_T,\u v_T) \coloneq h_T\int_{\partial T}\stab_T(\cdot,\dfbres{k}{\partial T}\u w_T)~\dfbres{k}{\partial T}\u v_T,
   \end{equation}
   where $\stab_T : \partial T \times \R \rightarrow \R$ is a measurable function satisfying, for all $v,w \in \R$ and almost every $\b x \in \partial T$,
   \begin{subequations}\label{eq:S:holder.continuity.strong.monotonicity}
     \begin{align} 
       |\stab_T(\b x,w)-\stab_T(\b x,v)| &\lesssim \sigma_\cst{hc}\left(\zeta(\b x)^p+|w|^p+|v|^p\right)^\frac{p-2}{p}| w-v |,\label{eq:S:holder.continuity}  \\
       \left(\stab_T(\b x,w)-\stab_T(\b x,v)\right) \left(w-v\right) &\gtrsim \sigma_\cst{sm}\left(\zeta(\b x)^p+|w|^p+|v|^p\right)^\frac{p-2}{p}|w-v|^{2},\label{eq:S:strong.monotonicity}\\
       \stab_T(\b x,0) &= 0, \label{eq:S:zero}
     \end{align}
   \end{subequations}
   while the boundary residual operator $\dfbres{k}{\partial T} : \dU{T}{k} \rightarrow L^p(\partial T)$ is such that, for all $\u v_T \in \dU{T}{k}$,
   \begin{equation}\label{eq:dfbres}
     (\dfbres{k}{\partial T} \u v_T)\res{F} \coloneq
     \frac{1}{h_T}\left[
       \proj{F}{k}(\drec{k+1}{T} \u v_T-v_F)-\proj{T}{k}(\drec{k+1}{T} \u v_T-v_T)
       \right]
     \qquad\forall F\in\F_T
   \end{equation}
   with potential reconstruction $\drec{k+1}{T}$ defined by \eqref{eq:rT}.
 \end{assumption}
 \begin{example}[Stabilization function]
   Local stabilization functions that match Assumption \ref{ass:sT} can be obtained setting, for all $T \in \T_h$, all $w \in \R$, and all $\b x \in \partial T$,
   \begin{equation}\label{eq:sT}
     \stab_T(\b x,w) \coloneqq \gamma_T\left(\zeta(\b x)^p+|w|^p\right)^\frac{\sob-2}{p}w,
   \end{equation}
   with $\gamma_T \in [\sigma_\cst{sm},\sigma_\cst{hc}]$ (see \eqref{eq:power-framed:constants.bound}).
   It can be checked that $\stab_T$ is a non-degenerate $p$-power-framed function satisfying \eqref{eq:S:holder.continuity.strong.monotonicity}; see \cite[Appendix A]{Botti.Castanon-Quiroz.ea:20} for a proof.
 \end{example}
 Leveraging the results in \cite{Di-Pietro.Droniou:17*1}, and additionally using $h_F \simeq h_T$ for all $F \in \F_T$, it can be checked that, for all $q \in (1,+\infty)$,
 \begin{equation}\label{eq:sT:stability.boundedness}
   \| \dgrad{k}{T}\u v_T \|_{L^q(T)^d}^q + h_T\|\dfbres{k}{\partial T}\u v_T\|_{L^q(\partial T)}^q
   \simeq \|\u v_T\|_{1,q,T}^q\qquad\forall\u v_T\in\dU{T}{k}.
 \end{equation}
 Additionally, $\dfbres{k}{\partial T}$ is polynomially consistent, i.e.,
 \begin{equation}\label{eq:sT:polynomial.consistency}
   \dfbres{k}{\partial T}(\I{T}{k} w) = 0\qquad\forall w\in\Poly^{k+1}(T).
 \end{equation}

 \subsection{Discrete problem}

 The discrete problem reads: Find $\u u_h \in \dU{h,0}{k}$ such that
 \begin{equation}
   \label{eq:lr.discrete} \cst{a}_h(\u u_h,\u v_h) = \displaystyle\int_\Omega f v_h \qquad \forall \u v_h \in \dU{h,0}{k}.
 \end{equation}

%------------------------------------------------------------------------------%

\section{Error analysis}\label{sec:error.estimate}

In this section, after establishing a stability result for the discrete function $\cst{a}_h$, we prove the error estimate that constitutes the main result of this paper.

\subsection{Strong monotonicity of the discrete diffusion function}

We recall the following inequality between sums of power (see \cite[Eq. (15)]{Botti.Castanon-Quiroz.ea:20}):
Let an integer $n\ge 1$ and a real number $m \in (0,+\infty)$ be given. Then, for all $a_1,\ldots,a_n \in (0,+\infty) $, we have
\begin{equation}\label{eq:sum-power}
  n^{-(m-1)^\ominus}\sum_{i=1}^n a_i^m \le \left(\sum_{i=1}^n a_i\right)^m  \le n^{(m-1)^\oplus}\sum_{i=1}^n a_i^m.
\end{equation}

\begin{lemma}[Strong monotonicity of $\cst{a}_h$]\label{lem:ah:holder.continuity.strong.monotonicity}
  For all $\u v_h, \u w_h \in \dU{h}{k}$, setting $\u e_h\coloneq \u v_h - \u w_h$, it holds
  \begin{equation}\label{eq:ah:strong.monotonicity}
    \begin{aligned}
    \| \u e_h \|_{1,\sob,h}^2
    &\lesssim \sigma_\cst{sm}^{-1}\left(\|\delta\|_{L^\sob(\Omega)}^\sob\!+\|\zeta\|_{L^\sob(\partial\mathcal{M}_h)}^\sob\!+\| \u v_h \|_{1,\sob,h}^\sob\!+\| \u w_h \|_{1,\sob,h}^\sob\right)^\frac{2-\sob}{\sob}
    \\
    &\qquad\times\left(\cst{a}_h(\u v_h,\u e_h)-\cst{a}_h(\u w_h,\u e_h)\right).
    \end{aligned}
  \end{equation}
\end{lemma}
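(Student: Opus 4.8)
The plan is to expand $\cst{a}_h(\u v_h,\u e_h)-\cst{a}_h(\u w_h,\u e_h)$ using the definition \eqref{eq:ah} and to bound it from below element by element, exploiting the strong monotonicity of both the flux $\stress$ and the stabilization functions $\stab_T$. First I would write, for each $T\in\T_h$,
\[
\int_T\bigl(\stress(\cdot,\dgrad{k}{T}\u v_T)-\stress(\cdot,\dgrad{k}{T}\u w_T)\bigr)\cdot\dgrad{k}{T}\u e_T
+\cst{s}_T(\u v_T,\u e_T)-\cst{s}_T(\u w_T,\u e_T),
\]
and observe that $\dgrad{k}{T}\u e_T=\dgrad{k}{T}\u v_T-\dgrad{k}{T}\u w_T$ by linearity of the gradient reconstruction, and similarly $\dfbres{k}{\partial T}\u e_T=\dfbres{k}{\partial T}\u v_T-\dfbres{k}{\partial T}\u w_T$. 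Then \eqref{eq:power-framed:s.strong.monotonicity} bounds the volumetric term below by $\sigma_\cst{sm}\int_T(\delta^\sob+|\dgrad{k}{T}\u v_T|^\sob+|\dgrad{k}{T}\u w_T|^\sob)^{\frac{\sob-2}{\sob}}|\dgrad{k}{T}\u e_T|^2$, and \eqref{eq:S:strong.monotonicity} bounds the stabilization term below, up to a hidden constant, by $\sigma_\cst{sm}h_T\int_{\partial T}(\zeta^\sob+|\dfbres{k}{\partial T}\u v_T|^\sob+|\dfbres{k}{\partial T}\u w_T|^\sob)^{\frac{\sob-2}{\sob}}|\dfbres{k}{\partial T}\u e_T|^2$.

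The crux is to convert these weighted $L^2$-type lower bounds into a bound on $\|\u e_T\|_{1,\sob,T}^2$. The mechanism is a reverse Hölder manoeuvre: writing $g$ for either $|\dgrad{k}{T}\u e_T|$ or $|\dfbres{k}{\partial T}\u e_T|$ and $W$ for the corresponding weight, one has, by Hölder with exponents $\frac{2}{\sob}$ and $\frac{2}{2-\sob}$,
\[
\int g^\sob=\int \bigl(W^{\frac{\sob-2}{\sob}}g^2\bigr)^{\frac{\sob}{2}}W^{-\frac{\sob-2}{2}}
\le\Bigl(\int W^{\frac{\sob-2}{\sob}}g^2\Bigr)^{\frac{\sob}{2}}\Bigl(\int W^{\frac{\sob}{2}}\Bigr)^{\frac{2-\sob}{2}},
\]
since $-\frac{\sob-2}{2}\cdot\frac{2}{2-\sob}=1$ and $W\ge \delta^\sob+|\dgrad{k}{T}\u v_T|^\sob+\cdots$ makes $\int W^{\sob/2}$ controllable. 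Summing over $T$ (and over faces, with the $h_T$ weights built into $\|\cdot\|_{L^\sob(\partial\mathcal M_h)}$), using \eqref{eq:sum-power} to handle the sum of the weight pieces, and \eqref{eq:sT:stability.boundedness} to identify $\|\dgrad{k}{T}\u e_T\|_{L^\sob(T)^d}^\sob+h_T\|\dfbres{k}{\partial T}\u e_T\|_{L^\sob(\partial T)}^\sob\simeq\|\u e_T\|_{1,\sob,T}^\sob$, one arrives at
\[
\|\u e_h\|_{1,\sob,h}^\sob\lesssim\Bigl(\sigma_\cst{sm}^{-1}(\cst{a}_h(\u v_h,\u e_h)-\cst{a}_h(\u w_h,\u e_h))\Bigr)^{\frac{\sob}{2}}\Bigl(\|\delta\|_{L^\sob(\Omega)}^\sob+\|\zeta\|_{L^\sob(\partial\mathcal M_h)}^\sob+\|\u v_h\|_{1,\sob,h}^\sob+\|\u w_h\|_{1,\sob,h}^\sob\Bigr)^{\frac{2-\sob}{2}},
\]
and raising to the power $\frac{2}{\sob}$ gives \eqref{eq:ah:strong.monotonicity}.

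The main obstacle, and the point requiring care, is the bookkeeping when passing from the local weighted estimates to the global norm: the reverse Hölder step must be applied locally on each $T$ (and each face), then the resulting products $\bigl(\int_T W^{\sob/2}\bigr)^{(2-\sob)/2}$ must be summed against the weighted-$L^2$ pieces using a further Hölder inequality over the index set $\T_h$ (again with exponents $\frac{2}{\sob}$ and $\frac{2}{2-\sob}$) so that all the $\|\cdot\|_{1,\sob,h}^\sob$ contributions recombine correctly and the $\cst{a}_h$-difference appears with exponent $\sob/2$. One also has to check that the nonnegativity of each local term (guaranteed by monotonicity) lets us discard cross-terms and that the weights $\delta$, $\zeta$ genuinely bound the denominators appearing in the reverse Hölder step, which is exactly where the degeneracy functions earn their keep. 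The other ingredients — linearity of $\dgrad{k}{T}$ and $\dfbres{k}{\partial T}$, the equivalence \eqref{eq:sT:stability.boundedness}, and \eqref{eq:sum-power} — are routine.
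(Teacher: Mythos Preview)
Your proposal is correct and follows essentially the same route as the paper: local strong monotonicity of $\stress$ and $\stab_T$, a reverse H\"older step with exponents $\tfrac{2}{\sob}$ and $\tfrac{2}{2-\sob}$ to extract $\|\dgrad{k}{T}\u e_T\|_{L^\sob}^\sob$ and $h_T\|\dfbres{k}{\partial T}\u e_T\|_{L^\sob}^\sob$, the equivalence \eqref{eq:sT:stability.boundedness}, and a discrete H\"older over $\T_h$ to globalize. One small slip: in your displayed reverse H\"older inequality the second factor should be $\bigl(\int W\bigr)^{\frac{2-\sob}{2}}$, not $\bigl(\int W^{\sob/2}\bigr)^{\frac{2-\sob}{2}}$, as your own exponent computation $-\tfrac{\sob-2}{2}\cdot\tfrac{2}{2-\sob}=1$ confirms; with this correction the weight integrals give exactly $\|\delta\|_{L^\sob(T)}^\sob+\|\dgrad{k}{T}\u v_T\|_{L^\sob(T)^d}^\sob+\|\dgrad{k}{T}\u w_T\|_{L^\sob(T)^d}^\sob$ (and the analogous boundary quantity), and everything goes through as you describe.
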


\begin{proof}
  Let $T \in \T_h$. Using the strong monotonicity \eqref{eq:power-framed:s.strong.monotonicity} of $\stress$ and the $\left(\frac{2}{2-\sob},\frac{2}{\sob}\right)$-H\"{o}lder inequality, we get
  \begin{equation}\label{eq:ah:sm:1}
    \begin{aligned}
      &\sigma_\cst{sm}^\frac{\sob}{2}\| \dgrad{k}{T} \u e_T\|_{L^\sob(T)^d}^\sob \\
      &\leq \int_T \Big(\delta^\sob+|\dgrad{k}{T}\u v_T|^\sob+|\dgrad{k}{T}\u w_T|^\sob\Big)^{\frac{2-\sob}{2}}\left[
        \Big(\stress(\cdot,\dgrad{k}{T}\u v_T)-\stress(\cdot,\dgrad{k}{T}\u w_T)\Big)\cdot\dgrad{k}{T} \u e_T
        \right]^\frac{\sob}{2}\\
      &\le \Big(\|\delta\|_{L^\sob(T)}^\sob+\| \dgrad{k}{T} \u v_T\|_{L^\sob(T)^d}^\sob+\| \dgrad{k}{T} \u w_T\|_{L^\sob(T)^d}^\sob\Big)^\frac{2-\sob}{2}
      \\
      &\qquad
      \times\left[
        \int_T \left(
        \stress(\cdot,\dgrad{k}{T}\u v_T)-\stress(\cdot,\dgrad{k}{T}\u w_T)
        \right)\cdot\dgrad{k}{T} \u e_T
        \right]^\frac{\sob}{2}\\
      &\lesssim \left(\|\delta\|_{L^\sob(T)}^\sob+\| \u v_T \|_{1,\sob,T}^\sob+\| \u w_T \|_{1,\sob,T}^\sob\right)^\frac{2-\sob}{2}
      \\
      &\qquad\times
      \left[
      \int_T \left(
      \stress(\cdot,\dgrad{k}{T}\u v_T)-\stress(\cdot,\dgrad{k}{T}\u w_T)
      \right)\cdot\dgrad{k}{T} \u e_T
      \right]^\frac{\sob}{2},
    \end{aligned}
  \end{equation}
  where the conclusion follows from the seminorm equivalence \eqref{eq:sT:stability.boundedness}.
  Similarly, the strong monotonicity \eqref{eq:S:strong.monotonicity} of $\stab_T$ followed by the same reasoning as above yields,
  \begin{equation}\label{eq:ah:sm:2}
  \begin{aligned}
    \sigma_\cst{sm}^\frac{\sob}{2}h_T\|\Delta_{\partial T}^k\u e_T\|_{L^p(\F_T)}^p
    &\lesssim \Big(h_T\|\zeta\|_{L^\sob(\partial T)}^\sob+\| \u v_T \|_{1,\sob,T}^\sob+\| \u w_T \|_{1,\sob,T}^\sob\Big)^\frac{2-\sob}{2}
    \\
    &\qquad\times
    \Big(
    \cst{s}_T(\u v_T,\u e_T)-\cst{s}_T(\u w_T,\u e_T)
    \Big)^\frac{\sob}{2}.
  \end{aligned}
  \end{equation}
  Combining the norm equivalence \eqref{eq:sT:stability.boundedness} with \eqref{eq:ah:sm:1} and \eqref{eq:ah:sm:2} and using \eqref{eq:sum-power} yields
  \begin{align*}
    \sigma_\cst{sm}^\frac{\sob}{2}\| \u e_T \|_{1,\sob,T}^\sob    
    &\lesssim \Big(\|\delta\|_{L^\sob(T)}^\sob+h_T\|\zeta\|_{L^\sob(\partial T)}^\sob+\| \u v_T \|_{1,\sob,T}^\sob+\| \u w_T \|_{1,\sob,T}^\sob\Big)^\frac{2-\sob}{2}
    \\
    &\qquad\times
    \Big(
    \cst{a}_T(\u v_T,\u e_T)-\cst{a}_T(\u w_T,\u e_T)
    \Big)^\frac{\sob}{2}.
  \end{align*}
  Summing over $T \in \T_h$, applying the discrete $\left(\frac{2}{2-p},\frac{2}{p}\right)$-H\"older inequality, and raising to the power $\frac{2}{\sob}$ yields \eqref{eq:ah:strong.monotonicity}.
\end{proof}

\begin{remark}[Well-posedness and a priori estimate]\label{thm:well-posedness}
  Using standard techniques (cf. \cite[Theorem 4.5]{Di-Pietro.Droniou:17}, see also \cite[Theorem 17]{Botti.Castanon-Quiroz.ea:20}), it can be proved that there exists a unique solution $\u u_h \in \dU{h,0}{k}$ to the discrete problem \eqref{eq:lr.discrete}.
  Additionally, it can be shown in a similar way as for the continuous case (cf. Proposition \ref{prop:a-priori}) that the following a priori bound holds:
  \begin{multline}\label{eq:discrete.solution:bounds}
    \| \u u_h \|_{1,\sob,h}
    \lesssim \left(\sigma_\cst{sm}^{-1}\| f \|_{L^{\sob'}(\Omega)}\right)^\frac{1}{\sob-1} \\
    \\
    +\min\!\left(\left(\|\delta\|_{L^\sob(\T_h)}^\sob{+}\|\zeta\|_{L^\sob(\partial\mathcal{M}_h)}^\sob\right)^\frac{1}{p}\!;\sigma_\cst{sm}^{-1}\left(\|\delta\|_{L^\sob(\T_h)}^\sob{+}\|\zeta\|_{L^\sob(\partial\mathcal{M}_h)}^\sob\right)^\frac{2-\sob}{\sob}\!\!\| f \|_{L^{\sob'}(\Omega)}\right).
  \end{multline}
\end{remark}

\subsection{Error estimate}

\begin{theorem}[Error estimate]\label{thm:error.estimate}
  Let $u \in U$ and $\u u_h \in \dU{h,0}{k}$ solve \eqref{eq:lr.weak} and \eqref{eq:lr.discrete}, respectively. Assume $u \in W^{k+2,\sob}(\T_h)$ and $\stress(\cdot,\GRAD u) \in W^{1,\sob'}(\Omega)^d \cap W^{k+1,\sob'}(\T_h)^d$.
  Then, under Assumptions \ref{ass:stress} and \ref{ass:sT},
  \begin{multline}\label{eq:error.estimate.original}
    \| \u u_h - \I{h}{k} u \|_{1,\sob,h} 
    \lesssim
    \mathcal N_{f}h^{k+1}|\stress(\cdot, \GRAD u)|_{W^{k+1,\sob'}(\T_h)^d}
    \\  
    + \mathcal N_{f}\sigma_\cst{hc}\left[\sum_{T\in\T_h}\left(\min\left(\eta_T;1\right)^{2-p}h_T^{(k+1)(p-1)}|u|_{W^{k+2,p}(T)}^{p-1}\right)^{p'}\right]^\frac{1}{p'},
  \end{multline}
  where, for all $T \in \T_h$, defining $\mathfrak D_T \coloneq \min\!\big(\essinf_{\b x \in T}\left(\delta(\b x)+|\GRAD u(\b x)|\right);\essinf_{\b x \in \partial T}\zeta(\b x)\big)$, we have set
  \begin{equation}\label{eq:eta.T}
\eta_T \coloneq \dfrac{h_T^{k+1}|u|_{W^{k+2,p}(T)}}{|T|^\frac{1}{p}\mathfrak D_T}
  \end{equation}
 with the convention that $\eta_T=+\infty$ if $\mathfrak D_T=0 < |u|_{W^{k+2,p}(T)}$, and $\eta_T=0$ if $\mathfrak D_T=|u|_{W^{k+2,p}(T)}=0$, and where
  \[
    \mathcal N_{f} \coloneqq \sigma_\cst{sm}^{-1}\bigg[\|\delta\|_{L^\sob(\Omega)}+\|\zeta\|_{L^\sob(\partial\mathcal{M}_h)}+\left(\sigma_\cst{sm}^{-1}\| f \|_{L^{\sob'}(\Omega)}\right)^\frac{1}{\sob-1}\bigg]^{2-\sob}\!.
  \]
\end{theorem}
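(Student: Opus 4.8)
The plan is to follow the classical Third Strang Lemma strategy for HHO methods, specialized to the monotone nonlinear setting: bound the error $\u e_h \coloneq \u u_h - \I{h}{k} u$ by testing the strong monotonicity of $\cst{a}_h$ (Lemma \ref{lem:ah:holder.continuity.strong.monotonicity}) against $\u e_h$ itself, and then controlling the resulting consistency defect. Concretely, I would start from
\[
\| \u e_h \|_{1,\sob,h}^2 \lesssim \sigma_\cst{sm}^{-1}\left(\|\delta\|_{L^\sob(\Omega)}^\sob+\|\zeta\|_{L^\sob(\partial\mathcal{M}_h)}^\sob+\| \u u_h \|_{1,\sob,h}^\sob+\| \I{h}{k} u \|_{1,\sob,h}^\sob\right)^\frac{2-\sob}{\sob}\big(\cst{a}_h(\u u_h,\u e_h)-\cst{a}_h(\I{h}{k} u,\u e_h)\big).
\]
The prefactor is absorbed into $\mathcal N_f$: by the a priori bounds \eqref{eq:continuous.solution:bounds:uh} and \eqref{eq:discrete.solution:bounds}, together with the boundedness \eqref{eq:I:boundedness} of $\I{T}{k}$, all the norms appearing there are controlled, up to the degeneracy data, by $(\sigma_\cst{sm}^{-1}\|f\|_{L^{\sob'}(\Omega)})^{1/(\sob-1)}$, so the bracket raised to the power $\frac{2-\sob}{\sob}$ is $\lesssim \sigma_\cst{sm}\,\mathcal N_f$. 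Using the discrete problem \eqref{eq:lr.discrete} to replace $\cst{a}_h(\u u_h,\u e_h)$ by $\int_\Omega f e_h$, everything reduces to estimating the \emph{consistency error}
\[
\mathcal E_h(\u e_h) \coloneq \int_\Omega f e_h - \cst{a}_h(\I{h}{k} u,\u e_h).
\]

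Next I would decompose $\mathcal E_h$. Writing $f = -\DIV\stress(\cdot,\GRAD u)$ and integrating by parts element by element, using the definition \eqref{eq:G} of $\dgrad{k}{T}$ and the commutation \eqref{eq:G:proj} $\dgrad{k}{T}(\I{T}{k} u) = \PROJ{T}{k}(\GRAD u)$, the term $\int_\Omega f e_h$ splits into a volumetric part matching $\int_T \stress(\cdot,\PROJ{T}{k}\GRAD u)\cdot\dgrad{k}{T}\u e_T$ modulo the difference $\stress(\cdot,\GRAD u) - \stress(\cdot,\PROJ{T}{k}\GRAD u)$, plus boundary terms involving $v_F - v_T$ that, after inserting the best-approximation $\PROJ{T}{k}$ of $\stress(\cdot,\GRAD u)$ and using the polynomial consistency \eqref{eq:sT:polynomial.consistency} of $\dfbres{k}{\partial T}$, are handled exactly as in the linear HHO consistency analysis. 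This yields a ``$T_1$'' term controlled by the approximation properties \eqref{eq:proj:app:T}--\eqref{eq:proj:app:F} of $\stress(\cdot,\GRAD u)$, producing the first summand $\mathcal N_f h^{k+1}|\stress(\cdot,\GRAD u)|_{W^{k+1,\sob'}(\T_h)^d}$ after a discrete Hölder inequality pairing the $L^{\sob'}$ bound on the flux residual with $\|\u e_h\|_{1,\sob,h}$ (one power of which is then divided out). The remaining ``$T_2$'' term is
\[
\sum_{T\in\T_h}\int_T\big(\stress(\cdot,\PROJ{T}{k}\GRAD u) - \stress(\cdot,\GRAD u)\big)\cdot\dgrad{k}{T}\u e_T
\]
plus its boundary/stabilization counterpart; here is where the degeneracy function enters.

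The main obstacle — and the genuinely new part — is estimating $T_2$ with the \emph{sharp}, regime-dependent rate. Using the Hölder continuity \eqref{eq:power-framed:s.holder.continuity} with $\b\tau = \PROJ{T}{k}\GRAD u$, $\b\eta = \GRAD u$, one gets an integrand bounded by $\sigma_\cst{hc}(\delta^\sob + |\PROJ{T}{k}\GRAD u|^\sob + |\GRAD u|^\sob)^{\frac{\sob-2}{\sob}}|\PROJ{T}{k}\GRAD u - \GRAD u|$. The key trick at \eqref{eq:consistency:ah:T2:1} is to \emph{interpolate} between two bounds for the factor $w_T \coloneq (\delta^\sob + |\PROJ{T}{k}\GRAD u|^\sob + |\GRAD u|^\sob)^{\frac{\sob-2}{\sob}}|\PROJ{T}{k}\GRAD u - \GRAD u|$: on one hand, by \eqref{eq:prolongement}, $w_T \le |\PROJ{T}{k}\GRAD u - \GRAD u|^{\sob-1}$, which after \eqref{eq:proj:app:T} gives the fully-degenerate rate $h_T^{(k+1)(\sob-1)}|u|_{W^{k+2,\sob}(T)}^{\sob-1}$; on the other hand, bounding $(\delta^\sob + \cdots)^{\frac{\sob-2}{\sob}} \le \mathfrak D_T^{\sob-2}$ using the essential infimum in the definition of $\mathfrak D_T$ (valid because $\sob - 2 \le 0$, so the power is decreasing and $\delta + |\GRAD u| \ge \mathfrak D_T$ a.e.\ on $T$, and $\zeta \ge \mathfrak D_T$ a.e.\ on $\partial T$ for the stabilization part), one gets $w_T \le \mathfrak D_T^{\sob-2}|\PROJ{T}{k}\GRAD u - \GRAD u|$, which after \eqref{eq:proj:app:T} gives $\mathfrak D_T^{\sob-2} h_T^{k+1}|u|_{W^{k+2,\sob}(T)}$, the non-degenerate rate. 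Taking the minimum of the two local bounds and factoring, one recognizes $\min(\eta_T;1)^{2-\sob} h_T^{(k+1)(\sob-1)}|u|_{W^{k+2,\sob}(T)}^{\sob-1}$ with $\eta_T$ as in \eqref{eq:eta.T} (the factor $|T|^{1/\sob}$ converting between the $L^\sob(T)$ norm of a constant and its pointwise value, $\|\PROJ{T}{k}\GRAD u - \GRAD u\|_{L^1(T)} \lesssim |T|^{1/\sob'}\|\PROJ{T}{k}\GRAD u - \GRAD u\|_{L^\sob(T)^d}$); one must be careful to treat the boundary stabilization term in parallel, using \eqref{eq:sT:stability.boundedness} and the trace estimate \eqref{eq:proj:app:F}, and to check the limiting conventions in the definition of $\eta_T$. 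A final discrete Hölder inequality with exponents $(\sob',\sob)$ pairs $\sum_T w_T$-type contributions against $\|\dgrad{k}{T}\u e_T\|_{L^\sob(T)^d}$-type contributions, the latter summing to $\|\u e_h\|_{1,\sob,h}$ by \eqref{eq:sT:stability.boundedness}; dividing through by $\|\u e_h\|_{1,\sob,h}$ (which we may assume nonzero) yields \eqref{eq:error.estimate.original}.
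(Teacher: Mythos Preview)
Your overall architecture is correct and matches the paper's: strong monotonicity \eqref{eq:ah:strong.monotonicity} applied to $\u e_h=\u u_h-\I{h}{k}u$, reduction of the right-hand side to a consistency defect $\mathcal E_h(\u e_h)$ via \eqref{eq:lr.discrete} and $f=-\DIV\stress(\cdot,\GRAD u)$, and a decomposition of $\mathcal E_h$ into a flux-approximation term (your $T_1$, handled by \eqref{eq:proj:app:F}), a nonlinear consistency term (your $T_2$), and the stabilization term. The absorption of the monotonicity prefactor into $\mathcal N_f$ via \eqref{eq:continuous.solution:bounds:uh}, \eqref{eq:discrete.solution:bounds}, and \eqref{eq:I:boundedness} is also right.

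The genuine gap is in your treatment of the non-degenerate branch of $T_2$. You propose to bound the pointwise quantity $w_T=(\delta^p+|\PROJ{T}{k}\GRAD u|^p+|\GRAD u|^p)^{\frac{p-2}{p}}|\PROJ{T}{k}\GRAD u-\GRAD u|$ by $\mathfrak D_T^{p-2}|\PROJ{T}{k}\GRAD u-\GRAD u|$ and then integrate. But if you keep the $(p',p)$-H\"older pairing with $\dgrad{k}{T}\u e_T$, you land on $\mathfrak D_T^{p-2}\|\PROJ{T}{k}\GRAD u-\GRAD u\|_{L^{p'}(T)^d}$, and since $p'>p$ and $\PROJ{T}{k}\GRAD u-\GRAD u$ is \emph{not} polynomial, there is no way to recover the $L^p$ approximation rate from the assumed regularity $u\in W^{k+2,p}(\T_h)$. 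Your parenthetical explanation for the factor $|T|^{1/p}$---via ``$L^p$ norm of a constant'' or a H\"older bound $\|\PROJ{T}{k}\GRAD u-\GRAD u\|_{L^1(T)}\lesssim|T|^{1/p'}\|\PROJ{T}{k}\GRAD u-\GRAD u\|_{L^p(T)^d}$---does not repair this and in any case goes in the wrong direction.

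What the paper actually does in the non-degenerate case $\eta_T<1$ is to \emph{swap the H\"older exponents}: it pairs with $(p,p')$ instead of $(p',p)$, so that the flux residual appears in $L^p$ (where \eqref{eq:proj:app:T} applies directly) and the test function appears as $\|\u e_T\|_{1,p',T}$. The factor $|T|^{(p-2)/p}$---equivalently the $|T|^{1/p}$ in the denominator of $\eta_T$---then comes from the \emph{discrete inverse inequality} $\|\u e_T\|_{1,p',T}\lesssim|T|^{\frac{p-2}{p}}\|\u e_T\|_{1,p,T}$ (cf.\ \cite[Lemmas~5.1 and~5.2]{Di-Pietro.Droniou:17}), which is legitimate precisely because $\u e_T$ lives in a polynomial space. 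The same swap-and-inverse-inequality device is needed for the stabilization term $\mathcal T_3$, where one first inserts $\u{\check u}_T=\I{T}{k}(\proj{T}{k+1}u)$ via \eqref{eq:sT:polynomial.consistency} and \eqref{eq:S:zero}, and then bounds $h_T^{1/p}\|\dfbres{k}{\partial T}(\u{\hat u}_T-\u{\check u}_T)\|_{L^p(\partial T)}$ through \eqref{eq:sT:stability.boundedness} and \eqref{eq:I:boundedness}. Once you incorporate this exponent swap and the inverse inequality on $\u e_T$, your argument closes exactly as in the paper.
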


 \begin{remark}[Convergence rates]\label{rem:ocv}
 For any $T\in\T_h$, the local \emph{flux degeneracy parameter} $\mathfrak D_T$ which appears in \eqref{eq:eta.T} is a measure of the local degeneracy of the flux and the stabilization function: the closer it is to zero, the more degenerate the model is.
 The dimensionless number $\eta_T$ defined in \eqref{eq:eta.T} determines the convergence rate of the contribution to the approximation error stemming from $T$.
  If $\eta_T \ge 1$ (\emph{locally degenerate case}), then the element $T$ contributes to the error with a term in $h_T^{(k+1)(p-1)}$.
  If $\eta_T \le h_T^{k+1}|u|_{W^{k+2,p}(T)}|T|^{-\frac{1}{p}}$, i.e. $\mathfrak D_T \ge 1$ (\emph{locally non-degenerate case}), the contribution to the error is in $h_T^{k+1}$. 
  The case $\eta_T\in (h_T^{k+1}|u|_{W^{k+2,p}(T)}|T|^{-\frac{1}{p}},1)$ corresponds to intermediate rates of convergence. 

At the global level, defining the number $\eta_h \coloneq \max_{T\in\T_h}\eta_T$, the bound $h_T \le h$ together with the error estimate \eqref{eq:error.estimate.original} yields
 \label{rem:ocv.global}
  \begin{equation}\label{eq:error.estimate.inf}
    \begin{aligned}
    \| \u u_h - \I{h}{k} u \|_{1,\sob,h} 
    \lesssim
    \mathcal N_{f}\bigg(
    &h^{k+1}|\stress(\cdot, \GRAD u)|_{W^{k+1,\sob'}(\T_h)^d}
    \\
    &+\sigma_\cst{hc}\min\left(\eta_h;1\right)^{2-p}h^{(k+1)(p-1)}|u|_{W^{k+2,p}(\T_h)}^{p-1}
    \bigg).
      \end{aligned}
  \end{equation}
  As a consequence, if $\eta_h \ge 1$ (\emph{globally degenerate case}), then the convergence rate is $(k+1)(p-1)$. 
  If $\eta_h \le h^{k+1}|u|_{W^{k+2,p}(\T_h)}$ (\emph{globally non-degenerate case}), the convergence rate is $k+1$. 
  Finally, the case $\eta_h\in (h^{k+1}|u|_{W^{k+2,p}(\T_h)},1)$ corresponds to intermediate rates of convergence.
  This is the finest global estimate that can be obtained from the local one. However, for practical purposes, if $u \in W^{k+2,\infty}(\T_h)$ then we can replace $\eta_h$ in \eqref{eq:error.estimate.inf} by the larger number
    \begin{equation}\label{eq:eta.h}
    \tilde\eta_h \coloneq \frac{|u|_{W^{k+2,\infty}(\T_h)}h^{k+1}}{\min_{T\in\T_h}\mathfrak D_T} =  \frac{|u|_{W^{k+2,\infty}(\T_h)}h^{k+1}}{\min\left(\essinf_{\Omega}\left(\delta+|\GRAD u|\right);\essinf_{\partial\mathcal M_h}\zeta\right)},
  \end{equation}
  with the same convention as above regarding fractions $C/0$ and $0/0$.
The convergence rate will result from the position of $\tilde\eta_h$ with respect to $h^{k+1}|u|_{W^{k+2,\infty}(\T_h)}$ and $1$, and $\tilde\eta_h \le h^{k+1}|u|_{W^{k+2,\infty}(\T_h)}$ (non-degenerate case) is equivalent to $\min\left(\essinf_{\Omega}\left(\delta+|\GRAD u|\right);\essinf_{\partial\mathcal M_h}\zeta\right) \ge 1$, which is consistent with the local requirement stated above.
\end{remark}

\begin{proof}[Proof of Theorem \ref{thm:error.estimate}]
  Define the consistency error as the linear form $\mathcal E_h: \dU{h}{k} \to \R$ such that, for all $\u v_h \in \dU{h}{k}$,
  \begin{equation}\label{eq:Eah}
    \mathcal E_h(\u v_h) \coloneqq \int_\Omega \DIV \stress(\cdot,\GRAD u)~v_h + \cst{a}_h(\I{h}{k} u,\u v_h).
  \end{equation}
  Let, for the sake of brevity, $\u{\hat u}_h \coloneqq \I{h}{k} u$ and $\u e_h \coloneqq \u u_h-\u{\hat u}_h \in \dU{h,0}{k}$. 
  \medskip\\
  (i) \textit{Estimate of the consistency error.}
  Expanding $\cst{a}_h$ according to its definition \eqref{eq:ah} in the expression \eqref{eq:Eah} of $\mathcal E_h$, inserting $\sum_{T\in\T_h}\left(\int_T\PROJ{T}{k}\stress(\cdot,\GRAD u) \cdot \dgrad{k}{T}\u e_T-\int_T\stress(\cdot,\GRAD u) \cdot \dgrad{k}{T}\u e_T\right)=0$ (the equality is a consequence of the definition of $\PROJ{T}{k}$), and rearranging, we obtain
  \begin{multline}\label{eq:consistency:ah:EJ}
    \mathcal E_h(\u e_h)
    =
    \underbrace{%
      \int_\Omega \DIV \stress(\cdot,\GRAD u) ~ e_h
      + \sum_{T\in\T_h}\int_T \PROJ{T}{k}\stress(\cdot,\GRAD u) \cdot \dgrad{k}{T}\u e_T
    }_{\mathcal T_1}
    \\
    + \underbrace{%
      \sum_{T\in\T_h}\int_T\left[ \stress(\cdot,\dgrad{k}{T} \u{\hat u}_T) - \stress(\cdot,\GRAD u)\right] \cdot \dgrad{k}{T}\u e_T
    }_{\mathcal T_2}
    + \underbrace{\sum_{T \in \T_h}\cst{s}_T(\u{\hat u}_T,\u e_T)}_{\mathcal T_3}.
  \end{multline}
  We proceed to estimate the terms in the right-hand side.
  \smallskip
  
  For the first term, we start by noticing that
  \begin{equation}\label{eq:consistency:ah:null}
    \sum_{T \in \T_h}\sum_{F \in \F_T} \int_F  e_F \left(\stress(\cdot,\GRAD u)\cdot\b n_{TF}\right) = 0
  \end{equation}
  as a consequence of the continuity of the normal trace of $\stress(\cdot,\GRAD u)$ together with the single-valuedness of $e_F$ across each interface $F\in\Fi$ and the fact that $e_F= 0$ for every boundary face $F\in\Fb$ (see \cite[Corollary 1.19]{Di-Pietro.Droniou:20}).
  Using an element by element integration by parts on the first term of $\mathcal T_1$ along with the definition \eqref{eq:G} of $\dgrad{k}{T}$, we can write
  \[
  \begin{aligned}
    \mathcal T_1
    &= \cancel{\sum_{T\in\T_h}\int_T \left[\PROJ{T}{k}\stress(\cdot,\GRAD u)- \stress(\cdot,\GRAD u)\right] \cdot \GRAD e_T} \\
    &\qquad + \sum_{T \in \T_h}\sum_{F \in \F_T} \left[
      \int_F  (e_F-e_T)\big(\PROJ{T}{k}\stress(\cdot,\GRAD u) \cdot \b n_{TF}\big)
      + \int_F e_T\left(\stress(\cdot,\GRAD u)\cdot \b n_{TF}\right)
      \right] \\
    &=  \sum_{T \in \T_h}\sum_{F \in \F_T} \int_F  (e_F-e_T)\left[\PROJ{T}{k}\stress(\cdot,\GRAD u)-\stress(\cdot,\GRAD u)\right]\cdot \b n_{TF},
  \end{aligned}
  \]
  where we have used the definition of $\PROJ{T}{k}$ together with the fact that $\GRAD e_T \in \Poly^{k-1}(T)^d \subset \Poly^{k}(T)^d$ to cancel the term in the first line,
  and we have inserted \eqref{eq:consistency:ah:null} and rearranged to conclude.
  H\"{o}lder inequalities give
  \begin{equation}\label{eq:consistency:ah:T1}
    \begin{aligned}
      \left|\mathcal T_1\right|
      &\lesssim \left(\displaystyle\sum_{T \in \T_h}h_T \|\stress(\cdot,\GRAD u)- \PROJ{T}{k}\stress(\cdot,\GRAD u) \|_{L^{\sob'}(\partial T)^d}^{\sob'} \right)^\frac{1}{\sob'}
      \\
      &\qquad\times\left(\sum_{T \in \T_h}\sum_{F \in \F_T}h_F^{1-\sob}\| e_F-e_T\|_{L^{\sob}(F)}^{\sob}\right)^\frac{1}{\sob}
      \\
      &\lesssim h^{k+1} |\stress(\cdot, \GRAD u)|_{W^{k+1,\sob'}(\T_h)^d}\| \u e_h \|_{1,\sob,h},
    \end{aligned}
  \end{equation}
  where we have used the $(k+1,\sob')$-trace approximation properties \eqref{eq:proj:app:F} of $\PROJ{T}{k}$ along with $h_T \le h$ for the first factor, and the definition \eqref{eq:norm.epsilon.r} of $\|{\cdot}\|_{1,\sob,h}$ for the second.
  \smallskip
      
   We move on to the next term $\mathcal T_2$. Let an element $T \in \T_h$ be fixed.
If $\eta_T \ge 1$, using the $(p',p)$-H\"older inequality together with the equivalence \eqref{eq:sT:stability.boundedness}, we obtain
\begin{equation}\label{eq:consistency:ah:T2:0}
  \begin{aligned}
    &\left|\int_T\left[ \stress(\cdot,\dgrad{k}{T} \u{\hat u}_T) - \stress(\cdot,\GRAD u)\right] \cdot \dgrad{k}{T}\u e_T\right|
    \\
    &\qquad\le \|\stress(\cdot,\dgrad{k}{T} \u{\hat u}_T)- \stress(\cdot,\GRAD u) \|_{L^{\sob'}(T)^d}\| \u e_T \|_{1,\sob,T} \\
    &\qquad \le \sigma_\cst{hc} \left\| \big(\delta+|\PROJ{T}{k}(\GRAD u) |  +  | \GRAD u |\big)^{\sob-2}| \PROJ{T}{k}(\GRAD u) - \GRAD u |\right\|_{L^{\sob'}(T)}\| \u e_T \|_{1,\sob,T} \\
    &\qquad \le \sigma_\cst{hc}  \| \PROJ{T}{k}(\GRAD u) - \GRAD u \|_{L^{\sob}(T)^d}^{p-1}\| \u e_T \|_{1,\sob,T}\\
    &\qquad \lesssim \sigma_\cst{hc}h_T^{(k+1)(p-1)}|u|_{W^{k+2,p}(T)}^{p-1}\|\u e_T\|_{1,\sob,T} \\
    &\qquad =  \sigma_\cst{hc} \min\left(\eta_T;1\right)^{2-p}h_T^{(k+1)(p-1)}|u|_{W^{k+2,p}(T)}^{p-1} \| \u e_T \|_{1,\sob,T},
  \end{aligned}
\end{equation}
where we have used the continuity \eqref{eq:power-framed:s.holder.continuity} of $\stress$ together with the commutation property \eqref{eq:G:proj} of the discrete gradient and \eqref{eq:sum-power} in the second bound,
inequality \eqref{eq:prolongement} with $x = \PROJ{T}{k}(\GRAD u)$, $y =  \GRAD u$, and $\alpha = \delta$ in the third bound, and the $(k+1,p,0)$-approximation properties of $\proj{T}{k}$ to conclude.
  
On the other hand, if $\eta_T < 1$ then, using the $(p,p')$-H\"older inequality together with the boundedness \eqref{eq:sT:stability.boundedness}, we infer
\begin{equation}\label{eq:consistency:ah:T2:1}
      \begin{aligned}
    &\left|\int_T\left[ \stress(\cdot,\dgrad{k}{T} \u{\hat u}_T) {-} \stress(\cdot,\GRAD u)\right] \cdot \dgrad{k}{T}\u e_T\right|
    \le \|\stress(\cdot,\dgrad{k}{T} \u{\hat u}_T) {-} \stress(\cdot,\GRAD u) \|_{L^{\sob}(T)^d}\| \u e_T \|_{1,\sob',T} \\
    &\qquad \lesssim \sigma_\cst{hc} \left\| \big(\delta+|\PROJ{T}{k}(\GRAD u) |  +  | \GRAD u |\big)^{\sob-2}| \PROJ{T}{k}(\GRAD u) - \GRAD u |\right\|_{L^{\sob}(T)}|T|^\frac{p-2}{p}\| \u e_T \|_{1,\sob,T} \\
    &\qquad \le \sigma_\cst{hc}  \mathfrak D_T^{\sob-2}|T|^\frac{p-2}{p}\| \PROJ{T}{k}(\GRAD u) - \GRAD u \|_{L^{\sob}(T)^d}\| \u e_T \|_{1,\sob,T}\\
    &\qquad \lesssim \sigma_\cst{hc}\mathfrak D_T^{\sob-2}|T|^\frac{p-2}{p}h_T^{k+1}|u|_{W^{k+2,p}(T)}\|\u e_T\|_{1,\sob,T} \\
    &\qquad =  \sigma_\cst{hc} \min\left(\eta_T;1\right)^{2-p}h_T^{(k+1)(p-1)}|u|_{W^{k+2,p}(T)}^{p-1} \| \u e_T \|_{1,\sob,T},
        \end{aligned}
\end{equation}  
    where we passed to the second line as in \eqref{eq:consistency:ah:T2:0} additionally using the bound $\| \u e_T \|_{1,\sob',T} \lesssim |T|^\frac{p-2}{p}\| \u e_T \|_{1,\sob,T}$ (see \cite[Lemmas 5.1 and 5.2]{Di-Pietro.Droniou:17}), used in the third line the fact that $\R \ni x \mapsto x^{\sob-2} \in \R$ is non-increasing to infer that $\left(\delta+| \PROJ{T}{k}(\GRAD u) |  +  | \GRAD u |\right)^{\sob-2}\le\left(\delta  +  | \GRAD u |\right)^{\sob-2}\le \mathfrak D_T^{\sob-2}$ almost everywhere in $T$, and concluded as above.
  
  Gathering the estimates \eqref{eq:consistency:ah:T2:0} and \eqref{eq:consistency:ah:T2:1} and using a discrete H\"older inequality yields
  \begin{equation}\label{eq:consistency:ah:T2}
    \left|\mathcal T_2\right|
    \lesssim 
    \sigma_\cst{hc}\left[\sum_{T\in\T_h}\left(\min\left(\eta_T;1\right)^{2-p}h_T^{(k+1)(p-1)}|u|_{W^{k+2,p}(T)}^{p-1}\right)^{p'}\right]^\frac{1}{p'} \| \u e_h \|_{1,\sob,h}.
  \end{equation}
  \smallskip

Let us finally consider $\mathcal T_3$. Let $T \in \T_h$, set $\u{\check u}_T \coloneq \I{T}{k}(\proj{T}{k+1} u)$ for the sake of brevity, and observe that $\stab_T\big(\cdot,\dfbres{k}{\partial T}\u{\check u}_T\big) = 0$ thanks to the polynomial consistency \eqref{eq:sT:polynomial.consistency} of $\dfbres{k}{\partial T}$ and the property \eqref{eq:S:zero} of $\stab_T$.
  
If $\eta_T \ge 1$, using the $(p',p)$-H\"older inequality together with the boundedness property \eqref{eq:sT:stability.boundedness} (with $q = p$), we infer 
  \begin{equation}\label{eq:sT:consist:I:0}
  \begin{aligned}
    |\cst{s}_T(\u{\hat u}_T,\u e_T)|
    &\lesssim h_T^\frac{1}{p'}\left\|
    \stab_T\big(\cdot,\dfbres{k}{\partial T}\u{\hat u}_T\big)
    - \stab_T\big(\cdot,\dfbres{k}{\partial T}\u{\check u}_T\big)
    \right\|_{L^{p'}(\partial T)}\|\u e_T\|_{1,\sob,T} \\
    &\lesssim \sigma_\cst{hc}h_T^\frac{1}{p'}\left\|\left(\zeta+|\dfbres{k}{\partial T}\u{\hat u}_T|+|\dfbres{k}{\partial T}\u{\check u}_T|
    \right)^{p-2}\hspace{-0.5ex}\dfbres{k}{\partial T}(\u{\hat u}_T{-}\u{\check u}_T)\right\|_{L^{p'}(\partial T)}\hspace{-1ex}\|\u e_T\|_{1,\sob,T} \\
    &\le \sigma_\cst{hc}h_T^\frac{1}{p'}\|\dfbres{k}{\partial T}(\u{\hat u}_T-\u{\check u}_T)\|_{L^{p}(\partial T)}^{p-1}\|\u e_T\|_{1,\sob,T} \\
    &\lesssim \sigma_\cst{hc}h_T^{(k+1)(p-1)}|u|_{W^{k+2,p}(T)}^{p-1}\|\u e_T\|_{1,\sob,T} \\
    &=  \sigma_\cst{hc} \min\left(\eta_T;1\right)^{2-p}h_T^{(k+1)(p-1)}|u|_{W^{k+2,p}(T)}^{p-1} \| \u e_T \|_{1,\sob,T},
  \end{aligned}
  \end{equation}  
  where we have used the continuity \eqref{eq:S:holder.continuity} of $\stab_T$ together with \eqref{eq:sum-power} to pass to the second line, inequality \eqref{eq:prolongement} with $x = \dfbres{k}{\partial T}\u{\hat u}_T$ , $y = \dfbres{k}{\partial T} \u{\check u}_T$ and $\alpha = \zeta$ in the third line, and the boundedness \eqref{eq:sT:stability.boundedness} of $\dfbres{k}{\partial T}$ and \eqref{eq:I:boundedness} of $\I{T}{k}$ together with the $(k+1,p,1)$-approximation properties of $\proj{T}{k+1}$ to conclude by writing $h_T^\frac{1}{p}\|\dfbres{k}{\partial T}(\u{\hat u}_T-\u{\check u}_T)\|_{L^{p}(\partial T)} \lesssim \|\I{T}{k}(u-\proj{T}{k+1}u)\|_{1,p,T} \lesssim |u-\proj{T}{k+1}u|_{W^{1,p}(T)} \lesssim h_T^{k+1}|u|_{W^{k+2,p}(T)}$.
  
    Otherwise, $\eta_T < 1$ and using the $(p,p')$-H\"older inequality together with boundedness property \eqref{eq:sT:stability.boundedness} (with $q = p'$), we infer as above that
   \begin{equation}\label{eq:sT:consist:I:1}
  \begin{aligned}
    &|\cst{s}_T(\u{\hat u}_T,\u e_T)|
    \\
    &\quad\lesssim h_T^\frac{1}{p}\left\|
    \stab_T\big(\cdot,\dfbres{k}{\partial T}\u{\hat u}_T\big)
    - \stab_T\big(\cdot,\dfbres{k}{\partial T}\u{\check u}_T\big)
    \right\|_{L^{p}(\partial T)}\| \u e_T \|_{1,\sob',T} \\
    &\quad\lesssim \sigma_\cst{hc}h_T^\frac{1}{p}\left\|\left(\zeta+|\dfbres{k}{\partial T}\u{\hat u}_T|+|\dfbres{k}{\partial T}\u{\check u}_T|
    \right)^{p-2}\dfbres{k}{\partial T}(\u{\hat u}_T-\u{\check u}_T)\right\|_{L^{p}(\partial T)}|T|^\frac{p-2}{p}\| \u e_T \|_{1,\sob,T}  \\
    &\quad\le \sigma_\cst{hc}\mathfrak D_T^{p-2}|T|^\frac{p-2}{p}h_T^\frac{1}{p}\|\dfbres{k}{\partial T}(\u{\hat u}_T-\u{\check u}_T)\|_{L^{\sob}(\partial T)}\| \u e_T \|_{1,\sob,T}   \\
    &\quad\lesssim \sigma_\cst{hc}\mathfrak D_T^{p-2}|T|^\frac{p-2}{p}h_T^{k+1}|u|_{W^{k+2,p}(T)}\|\u e_T\|_{1,\sob,T} \\
    &\quad=  \sigma_\cst{hc} \min\left(\eta_T;1\right)^{2-p}h_T^{(k+1)(p-1)}|u|_{W^{k+2,p}(T)}^{p-1} \| \u e_T \|_{1,\sob,T},
  \end{aligned}
  \end{equation}   
   where the second inequality follows as before from the bound $\| \u e_T \|_{1,\sob',T} \lesssim |T|^\frac{p-2}{p}\| \u e_T \|_{1,\sob,T}$ (see \cite[Lemmas 5.1 and 5.2]{Di-Pietro.Droniou:17}),
   the third inequality is a consequence of the monotonicity of $\R \ni x \mapsto x^{\sob-2} \in \R$ that yields $\big(\zeta+|\dfbres{k}{\partial T}\u{\hat u}_T|  +  |\dfbres{k}{\partial T}\u{\check u}_T|\big)^{\sob-2}\le\zeta^{\sob-2}\le \mathfrak D_T^{\sob-2}$ almost everywhere in $\partial T$, and the conclusion is obtained as in \eqref{eq:sT:consist:I:0}.
      
  Following then the same reasoning that lead to \eqref{eq:consistency:ah:T2}, we obtain for the third term
  \begin{equation}\label{eq:consistency:ah:T3}
    \left|\mathcal T_3\right|
    \lesssim 
    \sigma_\cst{hc}\left[\sum_{T\in\T_h}\left(\min\left(\eta_T;1\right)^{2-p}h_T^{(k+1)(p-1)}|u|_{W^{k+2,p}(T)}^{p-1}\right)^{p'}\right]^\frac{1}{p'} \| \u e_h \|_{1,\sob,h}.
  \end{equation}

  Plugging the bounds \eqref{eq:consistency:ah:T1}, \eqref{eq:consistency:ah:T2}, and \eqref{eq:consistency:ah:T3} into \eqref{eq:consistency:ah:EJ} yields
  \begin{equation}\label{eq:consistency:ah}
    \begin{aligned}
    |\mathcal E_h(\u e_h)| &\lesssim
    h^{k+1}|\stress(\cdot, \GRAD u)|_{W^{k+1,\sob'}(\T_h)^d}\| \u e_h \|_{1,\sob,h}
    \\
    &\quad +
    \sigma_\cst{hc}\left[\sum_{T\in\T_h}\left(\min\left(\eta_T;1\right)^{2-p}h_T^{(k+1)(p-1)}|u|_{W^{k+2,p}(T)}^{p-1}\right)^{p'}\right]^\frac{1}{p'} \| \u e_h \|_{1,\sob,h}.
    \end{aligned}
  \end{equation}
  \medskip\\
  (ii) \textit{Error estimate.} 
  Using the strong monotonicity \eqref{eq:ah:strong.monotonicity} of $\cst{a}_h$, we get
  \begin{align}
    \| \u e_h \|_{1,\sob,h}^2
    &\lesssim \sigma_\cst{sm}^{-1}\left(
    \|\delta\|_{L^\sob(\Omega)}^\sob+\|\zeta\|_{L^\sob(\partial\mathcal{M}_h)}^\sob+\| \u u_h \|_{1,\sob,h}^\sob+\|\u{\hat u}_h\|_{1,\sob,h}^\sob
    \right)^\frac{2-\sob}{\sob}
    \\
    &\qquad\times\left[
    \cst{a}_h(\u u_h,\u e_h)-\cst{a}_h(\u{\hat u}_h,\u e_h)
    \right] \nonumber\\
    &\lesssim \mathcal N_{f}\left[
      \cst{a}_h(\u u_h,\u e_h)-\cst{a}_h(\u{\hat u}_h,\u e_h)
      \right],
  \label{eq:error.estimate:step2:eh0}
  \end{align}
  where we have used the a priori bound \eqref{eq:discrete.solution:bounds} on the discrete solution along with the boundedness \eqref{eq:I:boundedness} of the global interpolator, the a priori bound \eqref{eq:continuous.solution:bounds:uh} on the continuous solution, and \eqref{eq:sum-power} to conclude.
  Furthermore, using the equation \eqref{eq:lr.discrete} (with $\u v_h = \u e_h$), and the fact that $f = -\DIV \stress(\cdot,\GRAD u)$ almost everywhere in $\Omega$, we see that
  \begin{equation}\label{eq:error.estimate:step1:eh1}
    \cst{a}_h(\u u_h,\u e_h)-\cst{a}_h(\u{\hat u}_h,\u e_h)
    = \int_\Omega f e_h - \cst{a}_h(\u{\hat u}_h,\u e_h)
    = -\mathcal E_h(\u e_h).
  \end{equation}
  Hence, plugging \eqref{eq:error.estimate:step1:eh1} into \eqref{eq:error.estimate:step2:eh0}, recalling the bound \eqref{eq:consistency:ah} on the consistency error, and simplifying, \eqref{eq:error.estimate.original} follows.
\end{proof}

%------------------------------------------------------------------------------------%

\section{Numerical examples}\label{sec:num.res}

In this section, we give some numerical results to confirm Theorem \ref{thm:error.estimate}.
We consider the domain $\Omega=(0,1)^2$ and define $\stress$ as the Carreau--Yasuda law of Example \ref{ex:Carreau--Yasuda} with $\sob\in\{1.25,~\!1.5,~\!1.75\}$ and $\mu = a = 1$. The degeneracy parameter $\delta$ and the exact solution $u$ will depend on the considered case.
The stabilization functions are defined by \eqref{eq:sT} with $\zeta$ such that the local flux degeneracy number $\mathfrak D_T$ introduced in \eqref{eq:eta.T} is equal to the first argument of its min for all $T \in \T_h$, so that $\zeta$ does not influence the error estimates.
The function $f$ and the Dirichlet boundary condition are inferred from the exact solution.
In all cases, except for the non-homogeneous boundary condition, these solutions match the assumptions required in Theorem \ref{thm:error.estimate}.
We consider the HHO scheme for $k \in \{1,2,3\}$ on a triangular mesh family.

\subsection{Non-degenerate flux}\label{subsec:num.res:flux} 

We consider nonzero constant degeneracy parameters $\delta \in \{1,~\!0.1,~\!10^{-2},~\!5 \cdot 10^{-4}\}$, and the potential $u$ is given by
\[
  u(x,y) = \sin\left(\pi x\right)\sin\left(\pi y\right) \quad \forall (x,y) \in \Omega.
  \]
Thus, the dimensionless number $\tilde\eta_h$ defined in \eqref{eq:eta.h} satisfies
\begin{equation}\label{eq:eta}
\tilde\eta_h = \mu_h(\delta) \coloneq \frac{2^\frac{k-1}{2}\pi^{k}h^{k+1}}{\delta}.
\end{equation}
Therefore, we should observe a $(k+1)(p-1)$ pre-asymptotic order of convergence until the size of the mesh is small enough compared to $\delta$ so that the convergence rate switches to $k+1$ (see Remark \ref{rem:ocv}).

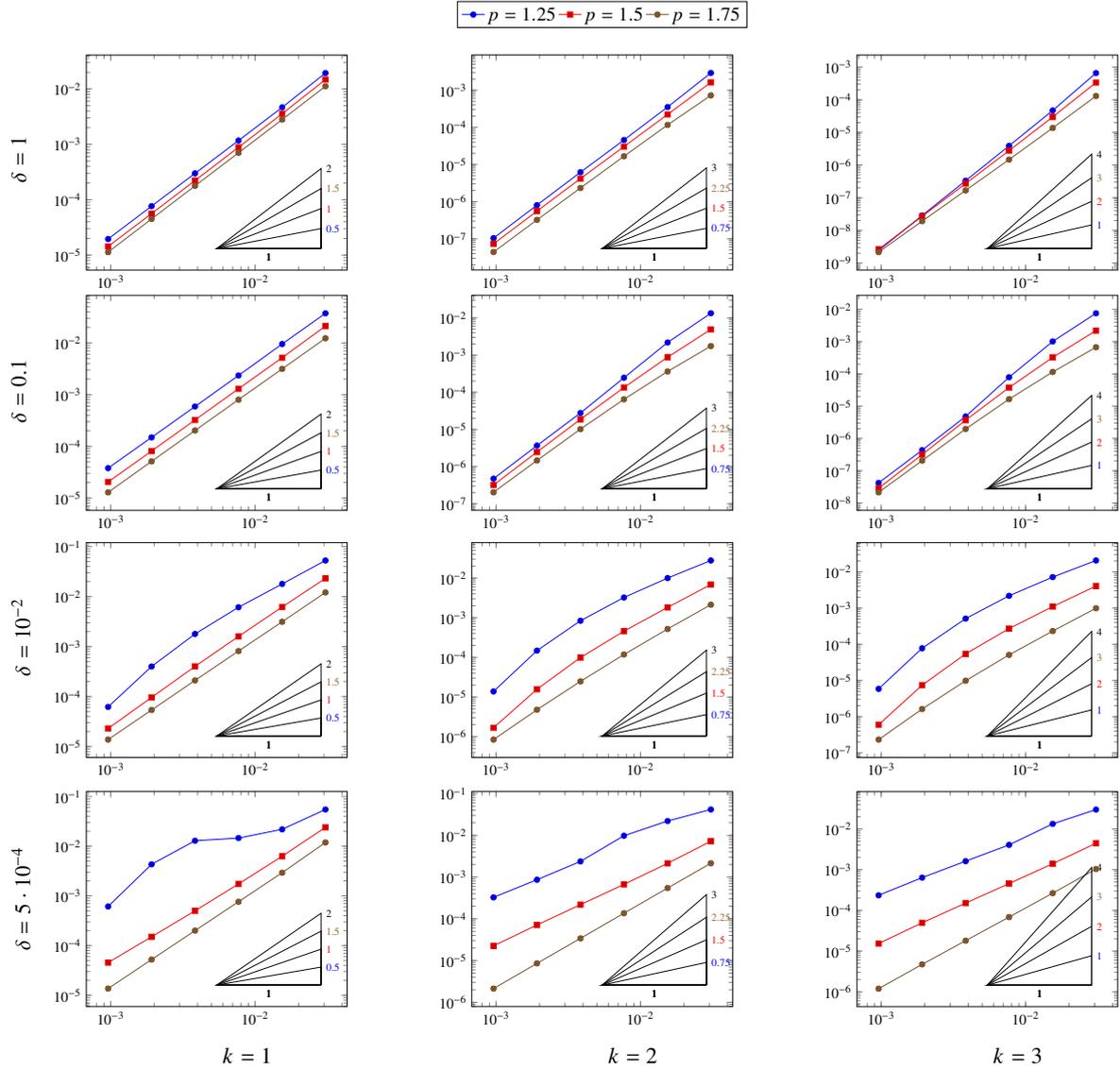
\begin{figure}
  \hspace{-5cm}\begin{center}
    %delta = 1
        \begin{minipage}[b]{0.05\columnwidth}
      \begin{tikzpicture}
      	\draw node[scale=0.8,rotate=90]{\hspace*{1.7cm} $\delta = 1$};
      \end{tikzpicture}
    \end{minipage}\hspace{-9mm}
    \hfill
    \begin{minipage}[b]{0.29\columnwidth}
      \begin{tikzpicture}[scale=0.53]
        \begin{loglogaxis}
          \addplot table[x=meshsize,y=err] {results/mesh1/k1_a1_p125_del1.txt};
          \addplot table[x=meshsize,y=err] {results/mesh1/k1_a1_p150_del1.txt};
          \addplot table[x=meshsize,y=err] {results/mesh1/k1_a1_p175_del1.txt};
          \logLogSlopeTriangle{0.90}{0.4}{0.1}{2}{black};
          \logLogSlopeTriangle{0.90}{0.4}{0.1}{0.5}{blue};
          \logLogSlopeTriangle{0.90}{0.4}{0.1}{1}{red};
          \logLogSlopeTriangle{0.90}{0.4}{0.1}{1.5}{darkbrown};                      
        \end{loglogaxis}
      \end{tikzpicture}
    \end{minipage}
    \hfill
    \begin{minipage}[b]{0.29\columnwidth}
      \begin{tikzpicture}[scale=0.53]
        \begin{loglogaxis}[
            legend style = { 
              at={(0.5,1.25)},
              anchor = north,
              legend columns=-1,
              font={\Large}
            }
          ]
          \addplot table[x=meshsize,y=err] {results/mesh1/k2_a1_p125_del1.txt};
          \addplot table[x=meshsize,y=err] {results/mesh1/k2_a1_p150_del1.txt};
          \addplot table[x=meshsize,y=err] {results/mesh1/k2_a1_p175_del1.txt};
          \logLogSlopeTriangle{0.90}{0.4}{0.1}{3}{black};
          \logLogSlopeTriangle{0.90}{0.4}{0.1}{0.75}{blue};
          \logLogSlopeTriangle{0.90}{0.4}{0.1}{1.5}{red};
          \logLogSlopeTriangle{0.90}{0.4}{0.1}{2.25}{darkbrown};
          \legend{$\sob=1.25$ , $\sob=1.5$ , $\sob=1.75$};
        \end{loglogaxis}
      \end{tikzpicture}
    \end{minipage}
    \hfill
    \begin{minipage}[b]{0.29\columnwidth}
      \begin{tikzpicture}[scale=0.53]
        \begin{loglogaxis}
          \addplot table[x=meshsize,y=err] {results/mesh1/k3_a1_p125_del1.txt};
          \addplot table[x=meshsize,y=err] {results/mesh1/k3_a1_p150_del1.txt};
          \addplot table[x=meshsize,y=err] {results/mesh1/k3_a1_p175_del1.txt};
          \logLogSlopeTriangle{0.90}{0.4}{0.1}{4}{black};
          \logLogSlopeTriangle{0.90}{0.4}{0.1}{1}{blue};
          \logLogSlopeTriangle{0.90}{0.4}{0.1}{2}{red};
          \logLogSlopeTriangle{0.90}{0.4}{0.1}{3}{darkbrown};
        \end{loglogaxis}
      \end{tikzpicture}
    \end{minipage}
    \\ %delta = 0.1
        \begin{minipage}[b]{0.05\columnwidth}
      \begin{tikzpicture}
      	\draw node[scale=0.8,rotate=90]{\hspace*{1.7cm} $\delta = 0.1$};
      \end{tikzpicture}
    \end{minipage}\hspace{-9mm}
    \hfill
    \begin{minipage}[b]{0.29\columnwidth}
      \begin{tikzpicture}[scale=0.53]
        \begin{loglogaxis}
          \addplot table[x=meshsize,y=err] {results/mesh1/k1_a1_p125_del01.txt};
          \addplot table[x=meshsize,y=err] {results/mesh1/k1_a1_p150_del01.txt};
          \addplot table[x=meshsize,y=err] {results/mesh1/k1_a1_p175_del01.txt};
          \logLogSlopeTriangle{0.90}{0.4}{0.1}{2}{black};
          \logLogSlopeTriangle{0.90}{0.4}{0.1}{0.5}{blue};
          \logLogSlopeTriangle{0.90}{0.4}{0.1}{1}{red};
          \logLogSlopeTriangle{0.90}{0.4}{0.1}{1.5}{darkbrown};         
%          \logLogSlopeTriangle{0.90}{0.4}{0.1}{0.5}{blue};
%          \logLogSlopeTriangle{0.90}{0.4}{0.1}{1}{red};
%          \logLogSlopeTriangle{0.90}{0.4}{0.1}{1}{darkbrown};                
        \end{loglogaxis}
      \end{tikzpicture}
    \end{minipage}
    \hfill
    \begin{minipage}[b]{0.29\columnwidth}
      \begin{tikzpicture}[scale=0.53]
        \begin{loglogaxis}[
            legend style = { 
              at={(0.5,1.25)},
              anchor = north,
              legend columns=-1,
              font={\Large}
            }
          ]
          \addplot table[x=meshsize,y=err] {results/mesh1/k2_a1_p125_del01.txt};
          \addplot table[x=meshsize,y=err] {results/mesh1/k2_a1_p150_del01.txt};
          \addplot table[x=meshsize,y=err] {results/mesh1/k2_a1_p175_del01.txt};
          \logLogSlopeTriangle{0.90}{0.4}{0.1}{3}{black};
          \logLogSlopeTriangle{0.90}{0.4}{0.1}{0.75}{blue};
          \logLogSlopeTriangle{0.90}{0.4}{0.1}{1.5}{red};
          \logLogSlopeTriangle{0.90}{0.4}{0.1}{2.25}{darkbrown};
        \end{loglogaxis}
      \end{tikzpicture}
    \end{minipage}
    \hfill
    \begin{minipage}[b]{0.29\columnwidth}
      \begin{tikzpicture}[scale=0.53]
        \begin{loglogaxis}
          \addplot table[x=meshsize,y=err] {results/mesh1/k3_a1_p125_del01.txt};
          \addplot table[x=meshsize,y=err] {results/mesh1/k3_a1_p150_del01.txt};
          \addplot table[x=meshsize,y=err] {results/mesh1/k3_a1_p175_del01.txt};
          \logLogSlopeTriangle{0.90}{0.4}{0.1}{4}{black};
          \logLogSlopeTriangle{0.90}{0.4}{0.1}{1}{blue};
          \logLogSlopeTriangle{0.90}{0.4}{0.1}{2}{red};
          \logLogSlopeTriangle{0.90}{0.4}{0.1}{3}{darkbrown};
        \end{loglogaxis}
      \end{tikzpicture}
    \end{minipage}
    \\ %delta = 1.e-2
        \begin{minipage}[b]{0.05\columnwidth}
      \begin{tikzpicture}
      	\draw node[scale=0.8,rotate=90]{\hspace*{1.7cm}$\delta = 10^{-2}$};
      \end{tikzpicture}
    \end{minipage}\hspace{-9mm}
    \hfill
        \begin{minipage}[b]{0.29\columnwidth}
      \begin{tikzpicture}[scale=0.53]
        \begin{loglogaxis}
          \addplot table[x=meshsize,y=err] {results/mesh1/k1_a1_p125_del001.txt};
          \addplot table[x=meshsize,y=err] {results/mesh1/k1_a1_p150_del001.txt};
          \addplot table[x=meshsize,y=err] {results/mesh1/k1_a1_p175_del001.txt};
          \logLogSlopeTriangle{0.90}{0.4}{0.1}{2}{black};
          \logLogSlopeTriangle{0.90}{0.4}{0.1}{0.5}{blue};
          \logLogSlopeTriangle{0.90}{0.4}{0.1}{1}{red};
          \logLogSlopeTriangle{0.90}{0.4}{0.1}{1.5}{darkbrown};  
%          \logLogSlopeTriangle{0.90}{0.4}{0.1}{0.75}{blue};
%          \logLogSlopeTriangle{0.90}{0.4}{0.1}{1}{red};
%          \logLogSlopeTriangle{0.90}{0.4}{0.1}{2.25}{darkbrown};
        \end{loglogaxis}
      \end{tikzpicture}
    \end{minipage}
    \hfill
        \begin{minipage}[b]{0.29\columnwidth}
      \begin{tikzpicture}[scale=0.53]
        \begin{loglogaxis}
          \addplot table[x=meshsize,y=err] {results/mesh1/k2_a1_p125_del001.txt};
          \addplot table[x=meshsize,y=err] {results/mesh1/k2_a1_p150_del001.txt};
          \addplot table[x=meshsize,y=err] {results/mesh1/k2_a1_p175_del001.txt};
          \logLogSlopeTriangle{0.90}{0.4}{0.1}{3}{black};
          \logLogSlopeTriangle{0.90}{0.4}{0.1}{0.75}{blue};
          \logLogSlopeTriangle{0.90}{0.4}{0.1}{1.5}{red};
          \logLogSlopeTriangle{0.90}{0.4}{0.1}{2.25}{darkbrown};
        \end{loglogaxis}
      \end{tikzpicture}
    \end{minipage}
    \hfill
        \begin{minipage}[b]{0.29\columnwidth}
      \begin{tikzpicture}[scale=0.53]
        \begin{loglogaxis}
          \addplot table[x=meshsize,y=err] {results/mesh1/k3_a1_p125_del001.txt};
          \addplot table[x=meshsize,y=err] {results/mesh1/k3_a1_p150_del001.txt};
          \addplot table[x=meshsize,y=err] {results/mesh1/k3_a1_p175_del001.txt};
          \logLogSlopeTriangle{0.90}{0.4}{0.1}{4}{black};
          \logLogSlopeTriangle{0.90}{0.4}{0.1}{1}{blue};
          \logLogSlopeTriangle{0.90}{0.4}{0.1}{2}{red};
          \logLogSlopeTriangle{0.90}{0.4}{0.1}{3}{darkbrown};
        \end{loglogaxis}
      \end{tikzpicture}
    \end{minipage}
       \\ %delta = 5.e-4
        \begin{minipage}[b]{0.05\columnwidth}
      \begin{tikzpicture}
      	\draw node[scale=0.8,rotate=90]{\hspace*{2cm}$\delta = 5\cdot 10^{-4}$};
      \end{tikzpicture}
    \end{minipage}\hspace{-9mm}
    \hfill
        \begin{minipage}[b]{0.29\columnwidth}
      \begin{tikzpicture}[scale=0.53]
        \begin{loglogaxis}
          \addplot table[x=meshsize,y=err] {results/mesh1/k1_a1_p125_del00005.txt};
          \addplot table[x=meshsize,y=err] {results/mesh1/k1_a1_p150_del00005.txt};
          \addplot table[x=meshsize,y=err] {results/mesh1/k1_a1_p175_del00005.txt};
          \logLogSlopeTriangle{0.90}{0.4}{0.1}{2}{black};
          \logLogSlopeTriangle{0.90}{0.4}{0.1}{0.5}{blue};
          \logLogSlopeTriangle{0.90}{0.4}{0.1}{1}{red};
          \logLogSlopeTriangle{0.90}{0.4}{0.1}{1.5}{darkbrown};  
%          \logLogSlopeTriangle{0.90}{0.4}{0.1}{0.75}{blue};
%          \logLogSlopeTriangle{0.90}{0.4}{0.1}{1}{red};
%          \logLogSlopeTriangle{0.90}{0.4}{0.1}{2.25}{darkbrown};
        \end{loglogaxis}
      \end{tikzpicture}
      \subcaption*{\hspace{8mm} $k=1$}
    \end{minipage}
    \hfill
        \begin{minipage}[b]{0.29\columnwidth}
      \begin{tikzpicture}[scale=0.53]
        \begin{loglogaxis}
          \addplot table[x=meshsize,y=err] {results/mesh1/k2_a1_p125_del00005.txt};
          \addplot table[x=meshsize,y=err] {results/mesh1/k2_a1_p150_del00005.txt};
          \addplot table[x=meshsize,y=err] {results/mesh1/k2_a1_p175_del00005.txt};
          \logLogSlopeTriangle{0.90}{0.4}{0.1}{3}{black};
          \logLogSlopeTriangle{0.90}{0.4}{0.1}{0.75}{blue};
          \logLogSlopeTriangle{0.90}{0.4}{0.1}{1.5}{red};
          \logLogSlopeTriangle{0.90}{0.4}{0.1}{2.25}{darkbrown};
        \end{loglogaxis}
      \end{tikzpicture}
      \subcaption*{\hspace{8mm} $k=2$}
    \end{minipage}
    \hfill
        \begin{minipage}[b]{0.29\columnwidth}
      \begin{tikzpicture}[scale=0.53]
        \begin{loglogaxis}
          \addplot table[x=meshsize,y=err] {results/mesh1/k3_a1_p125_del00005.txt};
          \addplot table[x=meshsize,y=err] {results/mesh1/k3_a1_p150_del00005.txt};
          \addplot table[x=meshsize,y=err] {results/mesh1/k3_a1_p175_del00005.txt};
          \logLogSlopeTriangle{0.90}{0.4}{0.1}{4}{black};
          \logLogSlopeTriangle{0.90}{0.4}{0.1}{1}{blue};
          \logLogSlopeTriangle{0.90}{0.4}{0.1}{2}{red};
          \logLogSlopeTriangle{0.90}{0.4}{0.1}{3}{darkbrown};
        \end{loglogaxis}
      \end{tikzpicture}
      \subcaption*{\hspace{8mm} $k=3$}
    \end{minipage}
  \end{center}
   \vspace{-5mm}
   \caption{Numerical results for the test case of Subsection \ref{subsec:num.res:flux}. The steeper slope (in black) indicates the $k+1$ convergence rate expected from Theorem \ref{thm:error.estimate} when the number $\delta$ is large enough compared to the mesh size. Otherwise the other slopes indicate the $(k+1)(p-1)$ convergence rate according to $p$.
   \label{tab:num.res:flux}}
\end{figure}

\begin{table}
% table caption is above the table
\caption{Convergence rates for the test case of Subsection \ref{subsec:num.res:flux}. The bold numbers in each column correspond to the $(k+1)(p-1)$ \textasciitilde\ $(k+1)$ convergence rates.}
\label{table:num.res:flux}       % Give a unique label
% For LaTeX tables use

\begin{center}
\begin{tabular}{c|ccc|ccc|ccccc}
\hline\noalign{\smallskip}
\multicolumn{10}{c}{$\delta = 1$}\\
\hline\noalign{\smallskip}
 $k$ & & 1 & & & 2 & & & 3 \\
\hline\noalign{\smallskip}
\diagbox[width=4em]{$h$}{$p$} & 1.25 & 1.5 & 1.75 & 1.25 & 1.5 & 1.75 & 1.25 & 1.5 & 1.75\\
\noalign{\smallskip}\hline\noalign{\smallskip}
3.07e-02 & \textbf{0.5 \textasciitilde\ 2} & \textbf{1 \textasciitilde\ 2} & \textbf{1.5 \textasciitilde\ 2} & \textbf{0.75 \textasciitilde\ 3} & \textbf{1.5 \textasciitilde\ 3} & \textbf{2.25 \textasciitilde\ 3} & \textbf{1 \textasciitilde\ 4} & \textbf{2 \textasciitilde\ 4} & \textbf{3 \textasciitilde\ 4} \\
1.54e-02 & 2.07 & 2.06 & 2.01 & 3.06 & 2.88 & 2.65 & 3.83 & 3.51 & 3.26 \\
7.68e-03 & 1.98 & 2.01 & 1.98 & 2.94 & 2.87 & 2.79 & 3.59 & 3.41 & 3.21 \\
3.84e-03 & 1.97 & 1.99 & 1.98 & 2.88 & 2.86 & 2.83 & 3.55 & 3.35 & 3.14 \\
1.92e-03 & 1.97 & 1.98 & 1.99 & 2.94 & 2.90 & 2.85 & 3.54 & 3.33 & 3.13 \\
9.60e-04 & 1.97 & 1.97 & 1.99 & 2.95 & 2.93 & 2.88 & 3.56 & 3.34 & 3.14 \\
\noalign{\smallskip}\hline
\multicolumn{10}{c}{}\\[-6pt]
\multicolumn{10}{c}{$\delta = 0.1$}\\
\hline\noalign{\smallskip}
 $k$ & & 1 & & & 2 & & & 3 \\
\hline\noalign{\smallskip}
\diagbox[width=4em]{$h$}{$p$} & 1.25 & 1.5 & 1.75 & 1.25 & 1.5 & 1.75 & 1.25 & 1.5 & 1.75\\
\noalign{\smallskip}\hline\noalign{\smallskip}
3.07e-02 & \textbf{0.5 \textasciitilde\ 2} & \textbf{1 \textasciitilde\ 2} & \textbf{1.5 \textasciitilde\ 2} & \textbf{0.75 \textasciitilde\ 3} & \textbf{1.5 \textasciitilde\ 3} & \textbf{2.25 \textasciitilde\ 3} & \textbf{1 \textasciitilde\ 4} & \textbf{2 \textasciitilde\ 4} & \textbf{3 \textasciitilde\ 4} \\
1.54e-02	&1.98	&2.04	&1.97	&2.63	&2.49	&2.28	&2.92	&2.77	&2.55\\
7.68e-03	&2.01	&1.99	&1.97	&3.14	&2.70	&2.48	&3.67	&3.10	&2.79\\
3.84e-03	&1.99	&2.00	&1.98	&3.15	&2.84	&2.66	&4.04	&3.35	&3.07\\
1.92e-03	&1.99	&2.00	&1.98	&2.92	&2.93	&2.80	&3.47	&3.54	&3.26\\
9.60e-04	&1.98	&1.98	&1.99	&2.96	&2.95	&2.85	&3.37	&3.46	&3.26\\
\noalign{\smallskip}\hline
\multicolumn{10}{c}{}\\[-6pt]
\multicolumn{10}{c}{$\delta = 10^{-2}$}\\
\hline\noalign{\smallskip}
 $k$ & & 1 & & & 2 & & & 3 \\
\hline\noalign{\smallskip}
\diagbox[width=4em]{$h$}{$p$} & 1.25 & 1.5 & 1.75 & 1.25 & 1.5 & 1.75 & 1.25 & 1.5 & 1.75\\
\noalign{\smallskip}\hline\noalign{\smallskip}
3.07e-02 & \textbf{0.5 \textasciitilde\ 2} & \textbf{1 \textasciitilde\ 2} & \textbf{1.5 \textasciitilde\ 2} & \textbf{0.75 \textasciitilde\ 3} & \textbf{1.5 \textasciitilde\ 3} & \textbf{2.25 \textasciitilde\ 3} & \textbf{1 \textasciitilde\ 4} & \textbf{2 \textasciitilde\ 4} & \textbf{3 \textasciitilde\ 4} \\
1.54e-02	&1.57	&1.91	&1.96	&1.48	&1.93	&2.04	&1.53	&1.89	&2.10\\
7.68e-03	&1.54	&1.94	&1.94	&1.62	&1.98	&2.13	&1.71	&2.01	&2.18\\
3.84e-03	&1.77	&1.99	&1.95	&1.95	&2.21	&2.25	&2.09	&2.32	&2.37\\
1.92e-03	&2.16	&2.06	&1.97	&2.50	&2.65	&2.38	&2.72	&2.87	&2.60\\
9.60e-04	&2.69	&2.07	&1.97	&3.42	&3.23	&2.50	&3.73	&3.63	&2.80\\
\noalign{\smallskip}\hline
\multicolumn{10}{c}{}\\[-6pt]
\multicolumn{10}{c}{$\delta = 5 \cdot 10^{-4}$}\\
\hline\noalign{\smallskip}
 $k$ & & 1 & & & 2 & & & 3 \\
\hline\noalign{\smallskip}
\diagbox[width=4em]{$h$}{$p$} & 1.25 & 1.5 & 1.75 & 1.25 & 1.5 & 1.75 & 1.25 & 1.5 & 1.75\\
\noalign{\smallskip}\hline\noalign{\smallskip}
3.07e-02 & \textbf{0.5 \textasciitilde\ 2} & \textbf{1 \textasciitilde\ 2} & \textbf{1.5 \textasciitilde\ 2} & \textbf{0.75 \textasciitilde\ 3} & \textbf{1.5 \textasciitilde\ 3} & \textbf{2.25 \textasciitilde\ 3} & \textbf{1 \textasciitilde\ 4} & \textbf{2 \textasciitilde\ 4} & \textbf{3 \textasciitilde\ 4} \\
1.54e-02	&1.33	&1.94	&2.03	&0.92	&1.77	&1.98	&1.17	&1.68	&1.98\\
7.68e-03	&0.58	&1.84	&1.94	&1.17	&1.67	&2.00	&1.71	&1.61	&1.94\\
3.84e-03	&0.18	&1.80	&1.93	&2.04	&1.61	&2.00	&1.33	&1.59	&1.93\\
1.92e-03	&1.58	&1.74	&1.94	&1.46	&1.61	&1.99	&1.34	&1.61	&1.93\\
9.60e-04	&2.82	&1.72	&1.94	&1.40	&1.66	&2.01	&1.44	&1.69	&1.98\\
\noalign{\smallskip}\hline
\end{tabular}
\end{center}
\end{table}

Indeed, in Figure \ref{tab:num.res:flux} and Table \ref{table:num.res:flux}, we can see for $k\in\{1,2\}$ in the first row of results (corresponding to the case $\delta =1$) a constant convergence rate of $k+1$, which is in agreement with Remark \ref{rem:ocv.global} since $\tilde\eta_h \le h^{k+1}|u|_{W^{k+2,\infty}(\T_h)} \Leftrightarrow \delta \ge 1$. From row to row, we can observe a lower pre-asymptotic convergence rate (still above $(k+1)(p-1)$, but close to it in certain cases) which becomes worse as $\delta$ decreases; this is expected since $\tilde\eta_h$ is proportional to $1/\delta$. We note that, as $k$ increases, the asymptotic rates are lower than the expected $h^{k+1}$, which could be due to the asymptotic regime not being achieved yet for these high-order schemes (due to the constants involving higher derivatives of $u$ in \eqref{rem:ocv.global}). The saturation of convergence rate, for $k=2,3$, at a lower rate than $(k+1)(p-1)$ when $\delta$ is very small could also be explained by the fact that the $W^{k+1,p'}$ norm of $\stress(\cdot,\GRAD u)$ explodes as $\delta\to 0$.

\subsection{Non-degenerate potential}\label{subsec:num.res:potential}

We consider $\delta = 0$ (the $p$-Laplacian case), and the potential $u$ is given by
\[
  u(x,y) = \sin\left(\pi x\right)\sin\left(\pi y\right)+(\pi+1)(x+y) \quad \forall (x,y) \in \Omega.
  \]
Since $|\GRAD u|\ge 1$ on $\Omega$, $\tilde\eta_h = \mu_h(1)$ and we should observe a constant convergence rate of $k+1$, which is indeed the case (see Figure \ref{tab:num.res:potential} and Table \ref{table:num.res:potential}). 

\begin{figure}
  \begin{center}
        \begin{minipage}[b]{0.3\columnwidth}
      \begin{tikzpicture}[scale=0.54]
        \begin{loglogaxis}
          \addplot table[x=meshsize,y=err] {results/mesh1/k1_a1_p125_del0_gradnon0.txt};
          \addplot table[x=meshsize,y=err] {results/mesh1/k1_a1_p150_del0_gradnon0.txt};
          \addplot table[x=meshsize,y=err] {results/mesh1/k1_a1_p175_del0_gradnon0.txt};
          \logLogSlopeTriangle{0.90}{0.4}{0.1}{2}{black};
          \logLogSlopeTriangle{0.90}{0.4}{0.1}{0.5}{blue};
          \logLogSlopeTriangle{0.90}{0.4}{0.1}{1}{red};
          \logLogSlopeTriangle{0.90}{0.4}{0.1}{1.5}{darkbrown};  
        \end{loglogaxis}
      \end{tikzpicture}
      \subcaption*{\hspace{8mm} $k=1$}
    \end{minipage}
    \hfill
        \begin{minipage}[b]{0.3\columnwidth}
      \begin{tikzpicture}[scale=0.54]
        \begin{loglogaxis}[
            legend style = { 
              at={(0.5,1.25)},
              anchor = north,
              legend columns=-1,
              font={\Large}
            }
          ]
          \addplot table[x=meshsize,y=err] {results/mesh1/k2_a1_p125_del0_gradnon0.txt};
          \addplot table[x=meshsize,y=err] {results/mesh1/k2_a1_p150_del0_gradnon0.txt};
          \addplot table[x=meshsize,y=err] {results/mesh1/k2_a1_p175_del0_gradnon0.txt};
          \logLogSlopeTriangle{0.90}{0.4}{0.1}{3}{black};
          \logLogSlopeTriangle{0.90}{0.4}{0.1}{0.75}{blue};
          \logLogSlopeTriangle{0.90}{0.4}{0.1}{1.5}{red};
          \logLogSlopeTriangle{0.90}{0.4}{0.1}{2.25}{darkbrown};
          \legend{$\sob=1.25$ , $\sob=1.5$ , $\sob=1.75$};
        \end{loglogaxis}
      \end{tikzpicture}
      \subcaption*{\hspace{8mm} $k=2$}
    \end{minipage}
    \hfill
        \begin{minipage}[b]{0.3\columnwidth}
      \begin{tikzpicture}[scale=0.54]
        \begin{loglogaxis}
          \addplot table[x=meshsize,y=err] {results/mesh1/k3_a1_p125_del0_gradnon0.txt};
          \addplot table[x=meshsize,y=err] {results/mesh1/k3_a1_p150_del0_gradnon0.txt};
          \addplot table[x=meshsize,y=err] {results/mesh1/k3_a1_p175_del0_gradnon0.txt};
          \logLogSlopeTriangle{0.90}{0.4}{0.1}{4}{black};
          \logLogSlopeTriangle{0.90}{0.4}{0.1}{1}{blue};
          \logLogSlopeTriangle{0.90}{0.4}{0.1}{2}{red};
          \logLogSlopeTriangle{0.90}{0.4}{0.1}{3}{darkbrown};
        \end{loglogaxis}
      \end{tikzpicture}
      \subcaption*{\hspace{8mm} $k=3$}
    \end{minipage}
  \end{center}
   \vspace{-5mm}
   \caption{Numerical results for the test case of Subsection \ref{subsec:num.res:potential}. The steeper slope (in black) indicates the $k+1$ convergence rate. The other slopes indicate the $(k+1)(p-1)$ convergence rate according to $p$.
   \label{tab:num.res:potential}}
\end{figure}

\begin{table}
% table caption is above the table
\caption{Convergence rates for the test case of Subsection \ref{subsec:num.res:potential}. The bold numbers in each column correspond to the $(k+1)(p-1)$ \textasciitilde\ $(k+1)$ convergence rates.}
\label{table:num.res:potential}       % Give a unique label
% For LaTeX tables use
\begin{center}
\begin{tabular}{c|ccc|ccc|ccccc}
\hline\noalign{\smallskip}
 $k$ & & 1 & & & 2 & & & 3 \\
\hline\noalign{\smallskip}
\diagbox[width=4em]{$h$}{$p$} & 1.25 & 1.5 & 1.75 & 1.25 & 1.5 & 1.75 & 1.25 & 1.5 & 1.75\\
\noalign{\smallskip}\hline\noalign{\smallskip}
3.07e-02 & \textbf{0.5 \textasciitilde\ 2} & \textbf{1 \textasciitilde\ 2} & \textbf{1.5 \textasciitilde\ 2} & \textbf{0.75 \textasciitilde\ 3} & \textbf{1.5 \textasciitilde\ 3} & \textbf{2.25 \textasciitilde\ 3} & \textbf{1 \textasciitilde\ 4} & \textbf{2 \textasciitilde\ 4} & \textbf{3 \textasciitilde\ 4} \\
1.54e-02 & 2.04 & 2.04 & 1.98 & 2.98 & 2.97 & 2.93 & 4.06 & 4.10 & 3.98 \\
7.68e-03 & 2.00 & 1.99 & 1.96 & 3.05 & 3.06 & 2.96 & 3.97 & 4.01 & 3.96 \\
3.84e-03 & 2.01 & 2.01 & 1.98 & 2.99 & 3.02 & 2.98 & 3.95 & 3.98 & 3.98 \\
1.92e-03 & 2.00 & 2.01 & 1.98 & 2.97 & 2.98 & 2.98 & 3.98 & 3.98 & 3.98 \\
9.60e-04 & 1.99 & 1.99 & 1.99 & 2.98 & 2.98 & 2.99 & 3.98 & 3.99 & 3.98 \\
\noalign{\smallskip}\hline
\end{tabular}
\end{center}
\end{table}

\subsection{Non-degenerate flux-potential couple} \label{subsec:num.res:flux-potential}

The exact solution $u$ is given by
\[
  u(x,y) = \sin\left(\pi x\right)\sin\left(\pi y\right) \quad \forall (x,y) \in \Omega.
  \]
Since $\GRAD u$ vanishes at points $(x_i,y_i)_{1 \le i \le 5} \coloneq \{(0,0),(1,0),(0,1),(1,1),(0.5,0.5)\}$, we consider a degeneracy parameter function $\delta$ as the sum of bump functions centered at these points with 0.2 radius. Specifically,
\begin{equation}\label{eq:delta-gauss}
\delta(x,y) = \sum_{i=1}^5\left\{\begin{array}{cl}\exp\left(1-\dfrac{1}{1-25((x-x_i)^2+(y-y_i)^2)}\right) &\text{if } \sqrt{(x-x_i)^2+(y-y_i)^2} < 0.2, \\[.8em] 0 &\text{otherwise.}\end{array}\right.
\end{equation}
As a consequence, $\delta$ vanishes on about three quarters of $\Omega$, however, $\tilde\eta_h = \mu_h(1)$ and we should observe a constant convergence rate of $k+1$. This is confirmed by the results presented in Figure \ref{tab:num.res:flux-potential} and Table \ref{table:num.res:flux-potential}.

\begin{figure}
  \begin{center}
%        \begin{minipage}[b]{0.05\columnwidth}
%      \begin{tikzpicture}
%      	\draw node[scale=0.8,rotate=90]{\hspace*{1.8cm}$\delta$ defined by \eqref{eq:delta-gauss}};
%      \end{tikzpicture}
%    \end{minipage}
%        \hfill
        \begin{minipage}[b]{0.3\columnwidth}
      \begin{tikzpicture}[scale=0.54]
        \begin{loglogaxis}
          \addplot table[x=meshsize,y=err] {results/mesh1/k1_a1_p125_delbump.txt};
          \addplot table[x=meshsize,y=err] {results/mesh1/k1_a1_p150_delbump.txt};
          \addplot table[x=meshsize,y=err] {results/mesh1/k1_a1_p175_delbump.txt};
          \logLogSlopeTriangle{0.90}{0.4}{0.1}{2}{black};
          \logLogSlopeTriangle{0.90}{0.4}{0.1}{0.5}{blue};
          \logLogSlopeTriangle{0.90}{0.4}{0.1}{1}{red};
          \logLogSlopeTriangle{0.90}{0.4}{0.1}{1.5}{darkbrown};  
%          \logLogSlopeTriangle{0.90}{0.4}{0.1}{0.75}{blue};
%          \logLogSlopeTriangle{0.90}{0.4}{0.1}{1}{red};
%          \logLogSlopeTriangle{0.90}{0.4}{0.1}{2.25}{darkbrown};
        \end{loglogaxis}
      \end{tikzpicture}
      \subcaption*{\hspace{8mm} $k=1$}
    \end{minipage}
    \hfill
        \begin{minipage}[b]{0.3\columnwidth}
      \begin{tikzpicture}[scale=0.54]
        \begin{loglogaxis}[
            legend style = { 
              at={(0.5,1.25)},
              anchor = north,
              legend columns=-1,
              font={\Large}
            }
          ]
          \addplot table[x=meshsize,y=err] {results/mesh1/k2_a1_p125_delbump.txt};
          \addplot table[x=meshsize,y=err] {results/mesh1/k2_a1_p150_delbump.txt};
          \addplot table[x=meshsize,y=err] {results/mesh1/k2_a1_p175_delbump.txt};
          \logLogSlopeTriangle{0.90}{0.4}{0.1}{3}{black};
          \logLogSlopeTriangle{0.90}{0.4}{0.1}{0.75}{blue};
          \logLogSlopeTriangle{0.90}{0.4}{0.1}{1.5}{red};
          \logLogSlopeTriangle{0.90}{0.4}{0.1}{2.25}{darkbrown};
          \legend{$\sob=1.25$ , $\sob=1.5$ , $\sob=1.75$};
        \end{loglogaxis}
      \end{tikzpicture}
      \subcaption*{\hspace{8mm} $k=2$}
    \end{minipage}
    \hfill
        \begin{minipage}[b]{0.3\columnwidth}
      \begin{tikzpicture}[scale=0.54]
        \begin{loglogaxis}
          \addplot table[x=meshsize,y=err] {results/mesh1/k3_a1_p125_delbump.txt};
          \addplot table[x=meshsize,y=err] {results/mesh1/k3_a1_p150_delbump.txt};
          \addplot table[x=meshsize,y=err] {results/mesh1/k3_a1_p175_delbump.txt};
          \logLogSlopeTriangle{0.90}{0.4}{0.1}{4}{black};
          \logLogSlopeTriangle{0.90}{0.4}{0.1}{1}{blue};
          \logLogSlopeTriangle{0.90}{0.4}{0.1}{2}{red};
          \logLogSlopeTriangle{0.90}{0.4}{0.1}{3}{darkbrown};
        \end{loglogaxis}
      \end{tikzpicture}
      \subcaption*{\hspace{8mm} $k=3$}
    \end{minipage}
  \end{center}
  \vspace{-5mm}
   \caption{Numerical results for the test case of Subsection \ref{subsec:num.res:flux-potential}. The steeper slope (in black) indicates the $k+1$ convergence rate. The other slopes indicate the $(k+1)(p-1)$ convergence rate according to $p$.
   \label{tab:num.res:flux-potential}}
\end{figure}

\begin{table}
% table caption is above the table
\caption{Convergence rates for the test case of Subsection \ref{subsec:num.res:flux-potential}. The bold numbers in each column correspond to the $(k+1)(p-1)$ \textasciitilde\ $(k+1)$ convergence rates.}
\label{table:num.res:flux-potential}       % Give a unique label
% For LaTeX tables use
\begin{center}
\begin{tabular}{c|ccc|ccc|ccccc}
\hline\noalign{\smallskip}
 $k$ & & 1 & & & 2 & & & 3 \\
\hline\noalign{\smallskip}
\diagbox[width=4em]{$h$}{$p$} & 1.25 & 1.5 & 1.75 & 1.25 & 1.5 & 1.75 & 1.25 & 1.5 & 1.75\\
\noalign{\smallskip}\hline\noalign{\smallskip}
3.07e-02 & \textbf{0.5 \textasciitilde\ 2} & \textbf{1 \textasciitilde\ 2} & \textbf{1.5 \textasciitilde\ 2} & \textbf{0.75 \textasciitilde\ 3} & \textbf{1.5 \textasciitilde\ 3} & \textbf{2.25 \textasciitilde\ 3} & \textbf{1 \textasciitilde\ 4} & \textbf{2 \textasciitilde\ 4} & \textbf{3 \textasciitilde\ 4} \\
1.54e-02 & 1.70 & 1.84 & 1.95 & 3.12 & 2.87 & 2.74 & 3.43 & 3.39 & 3.32 \\
7.68e-03 & 1.89 & 1.97 & 2.00 & 2.87 & 2.50 & 2.30 & 4.58 & 4.14 & 3.81 \\
3.84e-03 & 1.90 & 1.86 & 1.89 & 2.90 & 2.80 & 2.73 & 3.32 & 3.01 & 2.82 \\
1.92e-03 & 2.00 & 2.00 & 1.99 & 2.90 & 2.83 & 2.79 & 3.80 & 3.64 & 3.52 \\
9.60e-04 & 1.99 & 1.98 & 1.99 & 3.05 & 3.01 & 2.94 & 4.03 & 4.00 & 3.93 \\
\noalign{\smallskip}\hline
\end{tabular}
\end{center}
\end{table}

\subsection{Degenerate problem}\label{subsec:num.res:degenerate}

We consider $\delta = 0$ (the $p$-Laplacian case), and the potential $u$ is given such that,
\[
u(x,y) = \frac{1}{10}\exp\left(-10\left(|x-0.5|^{p+\frac{k+2}{4}}+|y-0.5|^{p+\frac{k+2}{4}}\right)\right) \quad \forall (x,y) \in \Omega.
\]
The particular choice of $u$, which changes with $p$ and $k$, is driven by the need to ensure that the function and its flux have the required regularity for the error estimate in Theorem \ref{thm:error.estimate} to be valid (and thus to potentially avoid some of the issues observed in Section \ref{subsec:num.res:flux}), all the while not being too smooth or with a simple structure, which might artificially generate a better convergence of the scheme.
Since $\GRAD u$ vanishes on an entire region of the domain and $\delta=0$, we have, $\tilde\eta_h = +\infty$ and we should observe $(k+1)(p-1)$ asymptotic convergence rates.
The results for this test are presented in Figure \ref{tab:num.res:degenerate} and Table \ref{table:num.res:degenerate}. In most cases, they do confirm that the asymptotic rate of convergence is closer to $(k+1)(p-1)$ than $k+1$, with major exceptions for $(p,k)=(1.25,1)$ and $(p,k)=(1.75,3)$, for which the observed rate is closer to $k+1$ -- probably because, for the considered mesh sizes and parameter values, the first term in \eqref{eq:error.estimate.original} might be dominant due to a larger multiplicative constant. Another explanation could be that the asymptotic regime is not reached yet for these tests, or, in view of the recent results in \cite{carstensen:20}, that the error estimate in Theorem \ref{thm:error.estimate}, which is valid in a more general setting, is actually sub-optimal for these particular test cases.
  In any case, these tests are outliers in the results presented here, which otherwise support rather well the theoretical error estimate.

\begin{figure}
  \begin{center}
        \begin{minipage}[b]{0.3\columnwidth}
      \begin{tikzpicture}[scale=0.54]
        \begin{loglogaxis}
          \addplot table[x=meshsize,y=err] {results/mesh1/k1_a1_p125_del0_grad0.txt};
          \addplot table[x=meshsize,y=err] {results/mesh1/k1_a1_p150_del0_grad0.txt};
          \addplot table[x=meshsize,y=err] {results/mesh1/k1_a1_p175_del0_grad0.txt};
          \logLogSlopeTriangle{0.90}{0.4}{0.1}{2}{black};
          \logLogSlopeTriangle{0.90}{0.4}{0.1}{0.5}{blue};
          \logLogSlopeTriangle{0.90}{0.4}{0.1}{1}{red};
          \logLogSlopeTriangle{0.90}{0.4}{0.1}{1.5}{darkbrown};  
        \end{loglogaxis}
      \end{tikzpicture}
      \subcaption*{\hspace{8mm} $k=1$}
    \end{minipage}
    \hfill
        \begin{minipage}[b]{0.3\columnwidth}
      \begin{tikzpicture}[scale=0.54]
        \begin{loglogaxis}[
            legend style = { 
              at={(0.5,1.25)},
              anchor = north,
              legend columns=-1,
              font={\Large}
            }
          ]
          \addplot table[x=meshsize,y=err] {results/mesh1/k2_a1_p125_del0_grad0.txt};
          \addplot table[x=meshsize,y=err] {results/mesh1/k2_a1_p150_del0_grad0.txt};
          \addplot table[x=meshsize,y=err] {results/mesh1/k2_a1_p175_del0_grad0.txt};
          \logLogSlopeTriangle{0.90}{0.4}{0.1}{3}{black};
          \logLogSlopeTriangle{0.90}{0.4}{0.1}{0.75}{blue};
          \logLogSlopeTriangle{0.90}{0.4}{0.1}{1.5}{red};
          \logLogSlopeTriangle{0.90}{0.4}{0.1}{2.25}{darkbrown};
          \legend{$\sob=1.25$ , $\sob=1.5$ , $\sob=1.75$};
        \end{loglogaxis}
      \end{tikzpicture}
      \subcaption*{\hspace{8mm} $k=2$}
    \end{minipage}
    \hfill
        \begin{minipage}[b]{0.3\columnwidth}
      \begin{tikzpicture}[scale=0.54]
        \begin{loglogaxis}
          \addplot table[x=meshsize,y=err] {results/mesh1/k3_a1_p125_del0_grad0.txt};
          \addplot table[x=meshsize,y=err] {results/mesh1/k3_a1_p150_del0_grad0.txt};
          \addplot table[x=meshsize,y=err] {results/mesh1/k3_a1_p175_del0_grad0.txt};
          \logLogSlopeTriangle{0.90}{0.4}{0.1}{4}{black};
          \logLogSlopeTriangle{0.90}{0.4}{0.1}{1}{blue};
          \logLogSlopeTriangle{0.90}{0.4}{0.1}{2}{red};
          \logLogSlopeTriangle{0.90}{0.4}{0.1}{3}{darkbrown};
        \end{loglogaxis}
      \end{tikzpicture}
      \subcaption*{\hspace{8mm} $k=3$}
    \end{minipage}
  \end{center}
  \vspace{-5mm}
   \caption{Numerical results for the test case of Subsection \ref{subsec:num.res:degenerate}. The steeper slope (in black) indicates the $k+1$ convergence rate. The other slopes indicate the $(k+1)(p-1)$ convergence rate according to $p$.
   \label{tab:num.res:degenerate}}
\end{figure}
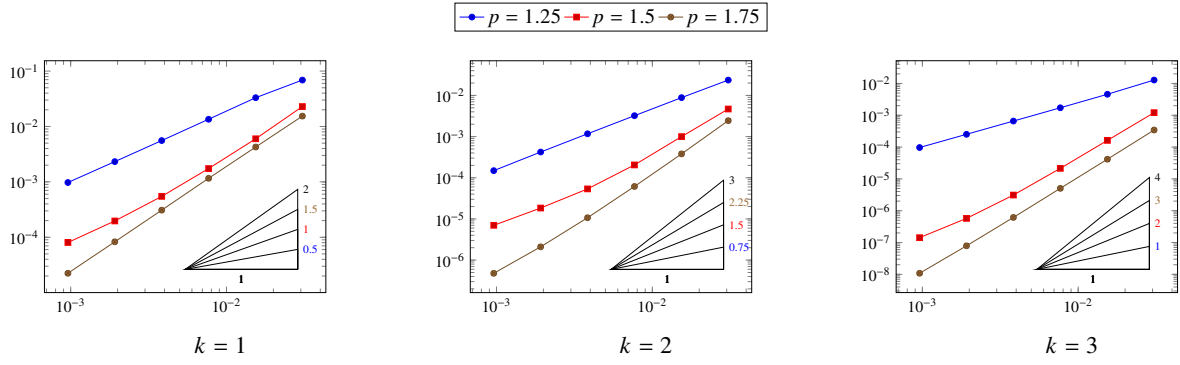

\begin{table}
% table caption is above the table
\caption{Convergence rates for the test case of Subsection \ref{subsec:num.res:degenerate}. The bold numbers in each column correspond to the $(k+1)(p-1)$ \textasciitilde\ $(k+1)$ convergence rates.}
\label{table:num.res:degenerate}       % Give a unique label
% For LaTeX tables use
\begin{center}
\begin{tabular}{c|ccc|ccc|ccccc}
\hline\noalign{\smallskip}
 $k$ & & 1 & & & 2 & & & 3 \\
\hline\noalign{\smallskip}
\diagbox[width=4em]{$h$}{$p$} & 1.25 & 1.5 & 1.75 & 1.25 & 1.5 & 1.75 & 1.25 & 1.5 & 1.75\\
\noalign{\smallskip}\hline\noalign{\smallskip}
3.07e-02 & \textbf{0.5 \textasciitilde\ 2} & \textbf{1 \textasciitilde\ 2} & \textbf{1.5 \textasciitilde\ 2} & \textbf{0.75 \textasciitilde\ 3} & \textbf{1.5 \textasciitilde\ 3} & \textbf{2.25 \textasciitilde\ 3} & \textbf{1 \textasciitilde\ 4} & \textbf{2 \textasciitilde\ 4} & \textbf{3 \textasciitilde\ 4} \\
1.54e-02 & 1.06 & 1.94 & 1.86 & 1.42 & 2.24 & 2.70 & 1.49 & 2.91 & 3.98 \\
7.68e-03 & 1.29 & 1.78 & 1.87 & 1.45 & 2.28 & 2.62 & 1.40 & 2.91 & 3.96 \\
3.84e-03 & 1.28 & 1.67 & 1.91 & 1.46 & 1.93 & 2.52 & 1.39 & 2.79 & 3.98 \\
1.92e-03 & 1.26 & 1.47 & 1.90 & 1.47 & 1.55 & 2.35 & 1.38 & 2.44 & 3.98 \\
9.60e-04 & 1.25 & 1.29 & 1.89 & 1.50 & 1.39 & 2.14 & 1.38 & 2.01 & 3.98 \\
\noalign{\smallskip}\hline
\end{tabular}
\end{center}
\end{table}

\section{Conclusion}

We have presented and analysed a Hybrid High-Order scheme of arbitrary order $k$, for a non-linear model that generalises the $p$-Laplace equation (with $p\in (1,2]$) through the addition of an offset in the flux, that potentially remove its singularity at $0$. Our error estimate highlights various convergence regimes for the scheme, depending on its degeneracy or lack thereof (the latter occurring in presence of a non-zero offset, or when the gradient of the continuous solution does not vanish); for a degenerate model we recover the known rates of convergence in $(k+1)(p-1)$, while an optimal rate of $(k+1)$, identical to the rate for linear models, is obtained when the model is not degenerate. These regimes are locally driven by a dimensionless number, and intermediate regimes are also identified.

Several numerical tests have been provided, and show a good agreement with the theoretical error estimate, except in a few cases where the convergence appears to be faster than expected (which could be due to the asymptotic regime not yet being attained in that case, or to the specifics of the particular degenerate test case considered here).

\raggedright
\printbibliography

\end{document}